\theoremstyle{plain}
\newtheorem{lema}{Lemma}
\newtheorem{prop}[lema]{Proposition}
\newtheorem{teo}[lema]{Theorem}
\newtheorem*{intro1}{Theorem \ref{main}}
\newtheorem*{intro2}{Theorem \ref{producto1}}
\newtheorem{coro}[lema]{Corollary}
\theoremstyle{remark}
\newtheorem{obs}[lema]{Remark}
\theoremstyle{definition}
\newtheorem{ej}[lema]{Example}
\newcommand{\Z}{\mathbb{Z}}
\newcommand{\R}{\mathbb{R}}
\newcommand{\g}{\textrm{Cl}}
\newcommand{\aut}{\textrm{Aut}}
\renewcommand{\g}{\mathfrak{g}}
\begin{document}

\title[Regular and semi-regular representations of groups by posets]{Regular and semi-regular representations of groups by posets}

\author[J.A. Barmak]{Jonathan Ariel Barmak $^{\dagger}$}

\thanks{$^{\dagger}$ Researcher of CONICET. Partially supported by grant PICT 2019-2338, PICT-2017-2806, PIP 11220170100357CO, UBACyT 20020190100099BA, UBACyT 20020160100081BA}

\address{Universidad de Buenos Aires. Facultad de Ciencias Exactas y Naturales. Departamento de Matem\'atica. Buenos Aires, Argentina.}

\address{CONICET-Universidad de Buenos Aires. Instituto de Investigaciones Matem\'aticas Luis A. Santal\'o (IMAS). Buenos Aires, Argentina. }

\email{jbarmak@dm.uba.ar}

\begin{abstract}
By a result of Babai, with finitely many exceptions, every group $G$ admits a semi-regular poset representation with three orbits, that is, a poset $P$ with automorphism group $\aut(P)\simeq G$ such that the action of $\aut(P)$ on the underlying set is free and with three orbits. Among finite groups, only the trivial group and $\Z_2$ have a regular poset representation (i.e. semi-regular with one orbit), however many infinite groups admit such a representation. In this paper we study non-necessarily finite groups which have a regular representation or a semi-regular representation with two orbits. We prove that if $G$ admits a Cayley graph which is locally the Cayley graph of a free group, then it has a semi-regular representation of height 1 with two orbits. In this case we will see that any extension of the integers by $G$ admits a regular representation. Applications are given to finite simple groups, hyperbolic groups, random groups and indicable groups. 
\end{abstract}

\subjclass[2010]{05E18,06A11,20B25,20B27.}

\keywords{Automorphism group of posets, Cayley graph, Dehn presentation, simple groups, random groups.}

\maketitle


\section{Introduction} \label{content}

A \textit{representation} of a group $G$ by a poset is a poset $P$ whose automorphism group $\aut (P)$ is isomorphic to $G$, together with an isomorphism $G\to \aut (P)$. In other words, it is a faithful action of $G$ on $P$ such that every automorphism of $P$ is induced by the action. A representation of $G$ by $P$ is called \textit{semi-regular} if the action of $G$ on the underlying set of $P$ is semi-regular (free), that is the stabilizer of each point $x\in P$ is trivial. In this case, the orbit of each point is in bijection with $G$. A representation is \textit{regular} if it is semi-regular with a unique orbit (i.e. transitive).

Birkhoff proved that every finite group $G$ has a semi-regular poset representation with $|G|+1$ orbits \cite{Bir}, and Frucht then proved that $d+2$ orbits suffice, where $d$ is the cardinality of any generator set of $G$ \cite{Fru}. Babai proved in \cite[Corollary 0.14]{Bab2}, \cite[Corollary 4.3]{Bab}, the surprising result that with finitely many exceptions every group (non-necessarily finite) admits a semi-regular poset representation with three orbits. Babai's proof uses his characterization of groups admitting a \textit{digraphical regular representation} (defined below), which in turn analyzes properties of generator sets of different classes of groups. In \cite{Bar} we gave a short self-contained proof that every finitely generated group admits a semi-regular poset representation with four orbits. 


Note that if a group $G$ admits a regular representation by a poset $P$, then $P$ is homogeneous (i.e. given two points in $P$, there is an automorphism of $P$ mapping one to the other). If $G$ is finite, this means that $P$ is discrete (different points are not comparable), so $G$ is a symmetric group. Since the action is free, $G$ must be trivial or $\Z_2$. The classification of infinite groups admitting a regular poset representation, however, is a non-trivial problem. Regular representations of groups by totally ordered sets have been studied in \cite{Ohk,GGHS}. If a group admits one such representation, it is isomorphic to a subgroup of $\R$.

\medskip

\noindent{\textbf{Problem.} Classify those (finite, infinite) groups $G$ which admit a semi-regular poset representation with exactly two orbits and those (infinite) groups that admit a regular poset representation. }

\medskip

Similar classification problems for finite or infinite groups have been addressed and solved in the context of graphs. Finitely generated groups admitting a \textit{graphical regular representation} have been characterized after a long series of articles of different authors \cite{Godsil, Het, Imr1, Imr2, Imr3, Mca, NW, NW2, Sab}. A graphical regular representation of a group $G$ is a graph $\Gamma$ with $\aut (\Gamma)\simeq G$ and such that $\aut(\Gamma)$ acts regularly on the vertex set. On the other hand the proof of the main result in \cite{Bab3} shows that with finitely many exceptions every finitely generated group admits a \textit{graphical semi-regular representation} with two orbits. Other classification problems studied include digraphical regular representations \cite{Bab2,Bab}, graphical and digraphical semi-regular representations with more orbits \cite{DFS}, regular and semi-regular representations by tournaments with two orbits \cite{BI,Godsil2}, oriented regular representations \cite{MS}.

Among semi-regular poset representations with two orbits, there are some which are perhaps easier to describe: those which are given by posets of height 1 (i.e. the longest chain has two elements). Because of their relevance in this work and their resemblance to Cayley graphs, they will receive the name of \textit{Cayley (poset) representations}. For finite groups every semi-regular representation with two orbits is of this type.

Recall that the girth $\g(\Gamma)$ of a graph $\Gamma$ is the length of its shortest cycle (infinite if the graph is a forest). When $S=\{s_x\}_{x\in X}$ is a generator set of a group $G$, the girth $\g(\Gamma(G,S))$ of the Cayley graph can be interpreted as the shortest non-trivial word in the free group $F(X)$ generated by $X$ which is trivial in $G$ when evaluated in the $s_x$. We will prove the following

\begin{intro1}
Let $G$ be a group generated by two elements $x,y$. Suppose that for every non-trivial word $w$ in the free group of rank 2, with length smaller than or equal to $21$, $w(x,y)\neq e\in G$, that is $\g(\Gamma(G,\{x,y\}))>21$. Then $G$ admits a Cayley representation. 
\end{intro1}

Using ideas from Weigel, Dixon, Pyber, Seress and Shalev we will prove that, with finitely many exceptions, every finite simple group admits a Cayley representation (Corollary \ref{corosimple}). A second application of Theorem \ref{main} concerns random groups (in Gromov's density model, and in the Arzhantseva and Ol'shanskii few relators model). Using results by Ollivier, and by Arzhantseva and Ol'shanskii regarding Dehn presentations and sixth groups we will show that for two generators a random group has a Cayley representation (Corollaries \ref{corodensity} and \ref{corofew}). In the density model we require $d<\frac{1}{5}$.  

Cayley representations turn out to be important when studying regular representations. 

\begin{intro2}
Let $G\neq \Z_2$ be a group which admits a Cayley representation. Then every extension of the integers by $G$ has a regular poset representation.
\end{intro2} 

With similar ideas to those used in the proof of Theorem \ref{producto1} we will prove that with finitely many exceptions every indicable group admits a semi-regular representation with two orbits, though not neccesarily a Cayley representation.

\section{Cayley, Haar and digraphical regular representations} \label{sectioncayley}

We begin with the study of semi-regular representations with two orbits. Note first that for a group $G$, the fact that there exists a poset $P$ with $\aut(P)\simeq G$ and the action of $\aut(P)$ on $P$ having two orbits, does not imply that $G$ admits a semi-regular poset representation with two orbits. For example $\Z_2^2$ is the automorphism group of the poset with underlying set $\{0,1,0',1'\}$ where $0,1<0',1'$, but $\Z_2^2$ does not admit a semi-regular representation with two orbits.

We turn to the special case of representations of height 1. Let $G$ be a group and let $S\subseteq G$ be a non-empty subset. We define the \textit{Cayley poset} $P(G,S)$ as follows. The underlying set is a union of two copies $G$, $G'=\{g'|\ g\in G\}$ of $G$. If $g\in G$ and $h\in gS$, then $g<h'$. No other pair of different points are comparable. The regular action of $G$ on each copy of $G$, by left multiplication, gives an action $L:G\to \aut(P(G,S))$ of $G$ on $P(G,S)$. If there are no other automorphisms in $P$ than those induced by (in the image of) $L$, $P(G,S)$ is a semi-regular representation of $G$ with two orbits. In this case $P(G,S)$, or more precisely $L:G\to \aut(P(G,S))$, is called a \textit{Cayley representation} of $G$. 

Note that if a \textsl{finite} group is represented semi-regularly with two orbits by a poset $P$, then $P$ has height equal to 1, as points in the same orbit cannot be comparable ($x<gx$ would lead to an infinite chain). Let $G$ be a non-necessarily finite group and let $P$ be a semi-regular representation with two orbits and of height 1. Then the orbits are the set of minimal points and the set of maximal points. Thus, the set of minimal points can be identified with $G$, the set of maximal points with $G'$, and the action of $G$ on $P$ can be assumed to be the left regular action on each copy of $G$. If we denote by $S$ the set of elements $h\in G$ such that $e<h'$, then $S\neq \emptyset$ and $P=P(G,S)$. Indeed, given $g,h\in G$, we have that $g<h'$ if and only if $e<(g^{-1}h)'$, which is equivalent to $g^{-1}h$ being in $S$, i.e. $h\in gS$. 

In conclusion the semi-regular representations with two orbits and of heght 1 of a group $G$ are the Cayley representations of $G$, and for $G$ finite, every semi-regular representation with two orbits is of this form. Our problem, in the finite case, is then to classify those finite groups $G$ for which there is a subset $S$ with $\aut(P(G,S))$ being equal to the automorphisms induced by $L$. An infinite group $G$, however, could admit a semi-regular representation with two orbits of a different kind. Let $P$ be constructed from two copies $\Z, \Z'$ of $\Z$ in which $i,i'$ are smaller than $j$ every time $i<j$, where $<$ is the usual order of the integers. This is a semi-regular representation of $\Z$ with two orbits. The difference between Cayley representations and general semi-regular representations with two orbits, and the relevance of the first when studying regular representations will be clear in Section \ref{sectionregular}.

Let $G$ be a non-necessarily finite group and $\emptyset \neq S\subseteq G$. Then $P(G,S)$ is a Cayley representation of $G$ if and only if every automorphism fixing the minimal point $e\in G\subseteq P(G,S)$ is the identity. Indeed if $\varphi \in \aut(P(G,S))$, then $\varphi(e)$ must be a minimal point, say $g\in G\in P(G,S)$. Let $L(g)$ be the corresponding automorphism induced by $L:G\to P(G,S)$. Thus, $L(g)^{-1}\varphi$ fixes $e$, and if this implies that $L(g)^{-1}\varphi=1_{P(G,S)}$, then $\varphi=L(g)$.  

The following result is easy to prove and details are left to the reader.

\begin{prop} \label{pnueva}
Let $G$ be a group, $\emptyset \neq S \subsetneq G$. Then

\noindent (i) For every $h\in G$, $P(G,S)$ is a Cayley representation of $G$ if and only if $P(G,Sh)$ is.

\noindent (ii) For every automorphism $\psi$ of $G$, $P(G,S)$ is a Cayley representation of $G$ if and only if $P(G,\psi (S))$ is.

\noindent (iii) $P(G,S)$ is a Cayley representation of $G$ if and only if the \textit{complement} $P(G,G\smallsetminus S)$ is.
\end{prop}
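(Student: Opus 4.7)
Each of the three statements should follow by exhibiting a natural bijection between the underlying sets that intertwines the two poset structures and behaves predictably under the left regular action; conjugation by such a bijection will then identify automorphism groups and will send the image of $L$ on one side to the image of $L$ on the other, so the condition ``every automorphism lies in the image of $L$'' transports across.

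For (i), the plan is to define $\varphi_h : P(G,S) \to P(G,Sh)$ by $\varphi_h(g)=g$ on minimal points and $\varphi_h(g')=(gh)'$ on maximal points. Because $k \in gS$ is equivalent to $kh \in g(Sh)$, the map $\varphi_h$ is an order isomorphism, and a direct check shows $\varphi_h \circ L(a) = L(a) \circ \varphi_h$ for every $a \in G$. Hence conjugation by $\varphi_h$ is a group isomorphism $\aut(P(G,S)) \to \aut(P(G,Sh))$ that carries the image of $L$ on the first onto the image of $L$ on the second, and the Cayley condition is preserved.

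For (ii), I would define $\varphi_\psi : P(G,S) \to P(G,\psi(S))$ by $g \mapsto \psi(g)$ and $g' \mapsto \psi(g)'$. Since $\psi$ is bijective, $k \in gS \Leftrightarrow \psi(k) \in \psi(g)\psi(S)$, so $\varphi_\psi$ is an order isomorphism. This time conjugation by $\varphi_\psi$ sends the $P(G,S)$-automorphism $L(a)$ to the $P(G,\psi(S))$-automorphism $L(\psi(a))$, and since $\psi$ is a bijection of $G$ the image of $L$ still maps onto the image of $L$. Thus one poset is a Cayley representation iff the other is.

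For (iii), the key observation is that $\emptyset \neq S \subsetneq G$ forces the partition $G \sqcup G'$ to be intrinsic to the poset: picking any $s \in S$, every $g \in G$ has $(gs)' > g$ and every $h' \in G'$ has $hs^{-1} < h'$, while no two elements of $G$ and no two elements of $G'$ are ever comparable. Consequently $G$ is precisely the set of minimal elements and $G'$ the set of maximal elements, and the identical statement holds for $P(G,G\smallsetminus S)$. Therefore any automorphism of either poset is a bipartition-preserving bijection of $G \sqcup G'$ respecting the bipartite comparability relation, and since the relations of $P(G,S)$ and $P(G,G\smallsetminus S)$ are complementary inside $G \times G'$, such a bijection preserves one iff it preserves the other. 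Hence $\aut(P(G,S)) = \aut(P(G,G\smallsetminus S))$ as subgroups of the permutation group of $G \sqcup G'$, acting in literally the same way, and the image of $L$ is the same on both sides; the Cayley property for one is the Cayley property for the other. The only mildly non-routine point in the whole proof is this intrinsic identification of the bipartition in (iii); everything else is a bookkeeping check.
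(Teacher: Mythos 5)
Your proof is correct: the transport arguments for (i) and (ii) via the order isomorphisms $\varphi_h$ and $\varphi_\psi$ intertwining $L$, and the observation in (iii) that $\emptyset\neq S\subsetneq G$ makes the bipartition intrinsic so that the two complementary posets have literally the same automorphism group, are exactly the details the paper declines to write out ("easy to prove and details are left to the reader"). Nothing is missing.
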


\begin{obs} \label{generan}
If $P$ is a semi-regular poset representation of a group $G$ different from the trivial group $1$ and $\Z_2$, then $P$ is connected. Indeed if $P$ is not connected, by semi-regularity each component must have trivial automorphism group. If the components are pairwise non-isomorphic, then $\aut(P)$ is trivial, a contradiction. Let $C,C'$ be two isomorphic components of $P$. If there is no other component, then $\aut(P)=\Z_2$, a contradiction. If there is another component, there is a non-trivial automorphism of $P$ fixing every point in that component, contradicting semi-regularity again. 

In the case of Cayley posets $P(G,S)$, the fact that $P(G,S)$ is connected is equivalent to $SS^{-1}=\{st^{-1}| \ s,t\in S\}$ being a generating set of $G$, which in particular implies that $S$ generates $G$.
\end{obs}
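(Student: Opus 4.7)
For the first assertion, the proof sketch appearing in the statement already outlines the strategy; my plan is to make each step precise. Writing $P=\bigsqcup_{i\in I} C_i$ for the decomposition into connected components, I would first verify that $\aut(C_i)=1$ for every $i\in I$: any non-identity $\psi\in\aut(C_i)$ extends by the identity on the remaining components to a non-trivial automorphism of $P$ with fixed points outside $C_i$, contradicting semi-regularity. Consequently every $\varphi\in\aut(P)$ permutes the components via some permutation $\sigma$ of $I$ and restricts to an isomorphism $C_i\to C_{\sigma(i)}$. If the components are pairwise non-isomorphic, $\sigma$ must be the identity and each restriction lies in the trivial group, giving $\varphi=\mathrm{id}$ and hence $G\cong\aut(P)=1$, a contradiction. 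Otherwise, fix two isomorphic components $C,C'$. When $|I|=2$, triviality of $\aut(C)$ implies $\aut(P)$ has exactly two elements (the identity and the unique swap $C\leftrightarrow C'$), so $G\cong\Z_2$, a contradiction. When $|I|\geq 3$, pick a third component $D$ and an isomorphism $C\to C'$, and define $\varphi\in\aut(P)$ to swap $C$ and $C'$ via this isomorphism while fixing every point of $D$; this $\varphi$ is non-trivial but not fixed-point free, contradicting semi-regularity once more.

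For the second assertion, my plan is to analyze walks in the Hasse diagram of $P(G,S)$. The cover relations are exactly $g\prec (gs)'$ for $g\in G$ and $s\in S$, so a walk of length two starting at a minimal point $g$ --- up to $(gs)'$ and back down to a minimal point --- ends at $g\cdot st^{-1}$ for some $s,t\in S$. Iterating, the set of minimal points in the connected component of $e$ is precisely $\langle SS^{-1}\rangle$. Since $e\in \langle SS^{-1}\rangle$, the maximal points in that same component form $\langle SS^{-1}\rangle\cdot S = \langle SS^{-1}\rangle\cdot s_0$ for any fixed $s_0\in S$, and applying left translation by elements of $G$ shows that every connected component of $P(G,S)$ has this shape. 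Hence $P(G,S)$ is connected if and only if $\langle SS^{-1}\rangle=G$, i.e., $SS^{-1}$ generates $G$. The last clause is immediate, since $SS^{-1}\subseteq \langle S\rangle$, so $\langle SS^{-1}\rangle=G$ forces $\langle S\rangle=G$.

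I do not anticipate any real obstacle here: the first part is a careful bookkeeping argument combining semi-regularity with the induced action of $\aut(P)$ on the set of components, and the second reduces to an elementary orbit computation for walks in the Hasse diagram of a height-$1$ bipartite poset.
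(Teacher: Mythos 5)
Your proof is correct and takes essentially the same route as the paper's own argument, which is embedded in the remark itself: trivial automorphism groups of components, the case analysis on isomorphic components, and the extension-by-identity trick all match. You have merely filled in the routine details, including the walk computation identifying the minimal points of the component of $e$ with $\langle SS^{-1}\rangle$, which the paper leaves implicit.
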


A \textit{Haar graphical representation} of a group $G$ is a semi-regular representation by a bipartite graph with the orbits being the parts of the bipartition. In other words, it is a bipartite graph $B$ with parts $V_1,V_2$ together with an isomorphism between $G$ and $\aut(B)$, such that the action of $G$ on the vertex set of $B$ is semi-regular with the two orbits being $V_1$ and $V_2$. Every representation of this kind is isomorphic to a graph $B(G,S)$ for some $S\subseteq G$. This is the bipartite graph with parts $G,G'$ and edges $(g,(gs)')$ for $g\in G, s\in S$. The characterization of graphs admitting a Haar graphical representation is an open problem \cite{DFS}.

The comparability graph $C(P(G,S))$ of $P(G,S)$ (in this case this is the underlying undirected graph of the Hasse diagram) is a bipartite graph, and every automorphism of the poset induces an automorphism of the graph. Thus, if $G$ admits a Haar graphical representation, it admits a Cayley representation. The converse is not true as no abelian group admits a Haar graphical representation \cite{DFS}, but they can have a Cayley representation (see Example \ref{ciclico} below).

\bigskip

Recall that a digraphical regular representation of a group $G$ is a digraph $\Gamma$ such that $\aut(\Gamma)\simeq G$ and the action of $\aut(\Gamma)$ on the vertex set of $\Gamma$ is regular. Babai proved the following characterization \cite{Bab2, Bab}.

\begin{teo} (Babai) \label{DRR}
A non-necessarily finite group $G$ admits a digraphical regular representation if and only if $G$ is different from $\Z_2^2, \Z_2^3, \Z_2^4, \Z_3^2$ and the quaternion group $Q_8$.
\end{teo}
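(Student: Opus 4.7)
The plan is to work throughout with Cayley digraphs $\Gamma(G,S)$, where $S\subseteq G\smallsetminus\{e\}$ and edges are $g\to gs$ for $s\in S$. Left translations always embed $G$ into $\aut(\Gamma(G,S))$ regularly, so $\Gamma(G,S)$ is a DRR precisely when the only $\varphi\in \aut(\Gamma(G,S))$ with $\varphi(e)=e$ is the identity. Any such $\varphi$ must permute $S$ (the out-neighbors of $e$), and since connectivity forces $S$ to generate $G$, $\varphi$ is determined on all of $G$ by propagation along words. The heart of the matter is therefore to find a generating set $S$ that is ``rigid'' in the sense that no nontrivial bijection of $S$ extends consistently to an automorphism of the digraph; a clean sufficient condition is that the setwise stabilizer of $S$ in $\aut(G)$ be trivial, together with a combinatorial condition ensuring the restriction map $\aut(\Gamma(G,S),e)\to \aut(G)_S$ is well-defined and injective.

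For the negative direction, I would enumerate subsets $S$ of each exceptional group $G\in\{\Z_2^2,\Z_2^3,\Z_2^4,\Z_3^2,Q_8\}$ modulo the equivalences analogous to Proposition \ref{pnueva}, namely $S\sim Sh$, $S\sim\psi(S)$ for $\psi\in\aut(G)$, and $S\sim G\smallsetminus S$. Because these groups are small and have very large automorphism groups (for example $\aut(\Z_2^k)=GL_k(\Z_2)$ and $\aut(Q_8)=S_4$), the number of equivalence classes is small, and for each one exhibits a nontrivial $\psi\in\aut(G)$ fixing $S$ setwise; this $\psi$ lifts to a nontrivial automorphism of $\Gamma(G,S)$ fixing $e$. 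The verification is finite, albeit tedious, and the obstruction is genuinely structural: the automorphism groups of these five groups are so rich relative to the number of generating sets that some symmetry always persists.

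For the positive direction, I would split by structural type. When $G$ is a non-abelian group outside the list, I would choose $S$ to contain elements distinguished by $\aut(G)$-invariants such as order, conjugacy class size, or commutator relations, and then augment $S$ to kill any remaining symmetry; e.g. pick $x$ of maximal order, then an element $y$ not conjugate to any power of $x$, and finally a few ``marker'' elements controlled by the structure of $G/[G,G]$. When $G$ is abelian, the canonical involution $\iota\colon g\mapsto g^{-1}$ is always present, so one must ensure $S\cap S^{-1}=\emptyset$ or very small and that the remaining action of $\aut(G)\smallsetminus\{1,\iota\}$ on subsets of $G$ is shattered; for cyclic groups of order $\geq 5$ this is immediate, and for direct products one separates coordinates using elements of different orders or different support. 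For infinite $G$ one proceeds by transfinite induction along a well-ordered generating sequence, at each stage adjoining an element to $S$ that destroys one of the (at most $|G|$-many) remaining candidate nontrivial automorphisms; this is where the cardinality bookkeeping must be done carefully. The main obstacle will be the abelian case and the small non-abelian finite groups lying just above the exceptional list, where the automorphism group is still large enough to require an ad hoc construction, and one must simultaneously balance $S$ being large enough to generate rigidly against being small enough to control $\aut(\Gamma(G,S))_e$.
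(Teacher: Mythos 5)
This statement is quoted in the paper as a theorem of Babai, with a citation to \cite{Bab2, Bab}; the paper gives no proof of it, only a one-line description of the idea behind the negative direction (for each of the five exceptional groups, every subset $S\subseteq G$ is stabilized setwise by some nontrivial automorphism of $G$). Your sketch of the negative direction matches that idea and is essentially sound: a nontrivial $\psi\in\aut(G)$ with $\psi(S)=S$ does induce a nonidentity automorphism of the Cayley digraph $\Gamma(G,S)$ fixing $e$, and the verification over the five small groups is a finite check (for $\Z_2^2$ and $\Z_2^3$ it even follows from a counting/orbit argument; for $\Z_2^4$, $\Z_3^2$ and $Q_8$ it is more delicate). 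One concrete error there: the equivalence $S\sim Sh$, imported from Proposition \ref{pnueva}, is \emph{not} valid for Cayley digraphs. It works for the Cayley posets $P(G,S)$ because the two copies of $G$ can be relabelled independently, but on a single vertex set $\Gamma(G,S)$ and $\Gamma(G,Sh)$ need not be isomorphic (e.g.\ $\Z_3$ with $S=\{1,2\}$ versus $S=\{0,2\}$), and translating $S$ does not preserve its setwise stabilizer in $\aut(G)$. So your reduction of the case analysis is partly illegitimate, though the full enumeration would still be finite.

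The genuine gap is the positive direction, which is the entire content of Babai's theorem. You correctly observe that any $\varphi\in\aut(\Gamma(G,S))$ fixing $e$ permutes $S$, but the permutation of $S$ it induces need not extend to, or come from, an automorphism of the group $G$; the ``combinatorial condition ensuring the restriction map $\aut(\Gamma(G,S))_e\to\aut(G)_S$ is well-defined and injective'' that you invoke is precisely the hard part, and triviality of the setwise stabilizer of $S$ in $\aut(G)$ is necessary but very far from sufficient for $\Gamma(G,S)$ to be a DRR. Your description of how to choose $S$ (``elements distinguished by $\aut(G)$-invariants,'' ``marker elements,'' ``shattered'' actions) never engages with digraph automorphisms that are not group automorphisms, which is where all known proofs (Babai's, and the analogous GRR arguments of Nowitz--Watkins, Hetzel, Godsil) expend their effort. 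The transfinite-induction step for infinite $G$ has the same defect in amplified form: adjoining an element to $S$ to ``destroy'' one candidate automorphism can create new automorphisms of the resulting digraph, and the stabilizer $\aut(\Gamma(G,S))_e$ is not determined by any initial segment of the construction, so the bookkeeping you defer is not merely careful cardinal arithmetic but the actual theorem. As written, the proposal establishes (modulo the finite checks) only the ``only if'' direction and does not constitute a proof of the ``if'' direction.
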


Babai observes that every digraphical regular representation induces a semi-regular poset representation with three orbits \cite[Proposition 7.3]{Bab2}. This representation is constructed by taking three copies $G,G',G''$ of $G$ and setting $g<g'<g''$ for every $g\in G$ and $g<h''$ if $(g,h)$ is an edge in the digraphical regular representation. For $Q_8$ Babai provides a separate construction of a semi-regular poset representation with three orbits. Thus, every group different from $\Z_2^2, \Z_2^3, \Z_2^4, \Z_3^2$ admits a semi-regular poset representation with three orbits. The converse of this statement is not analyzed in \cite{Bab2, Bab}.

\begin{prop} \label{zeta22}
The group $\Z_2^2$ does not admit a semi-regular poset representation with three orbits.
\end{prop}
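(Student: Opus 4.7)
The plan is to assume for contradiction that $P$ is a semi-regular representation of $G=\Z_2^2$ with three orbits $O_1,O_2,O_3$, and then construct an automorphism of $P$ outside the $G$-action. First I would reduce to the simplest combinatorial form: since $G$ is finite, each orbit must be an antichain (any $x<gx$ in the orbit of $x$ leads to the cycle $x<gx<g^2x=x$), and relations between two distinct orbits can only go in one direction, because if both $e_i<g_j$ and $e_j<h_i$ held, then $e_i<g_j<(gh)_i$ would yield a comparison inside $O_i$. So after relabeling, the poset is determined by three subsets $R_{ij}=\{g\in G:e_i<g_j\}\subseteq G$ with $i<j$, subject to $R_{12}R_{23}\subseteq R_{13}$ from transitivity, and by Remark~\ref{generan} the poset $P$ is connected.

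Next I would exploit the outer automorphism group $\aut(G)\cong S_3$. Given $\alpha\in\aut(G)$ and $k_1,k_2,k_3\in G$, the map $\varphi_{\alpha,k}(g_i)=(k_i+\alpha(g))_i$ is a bijection of $P$, and using that $G$ is abelian of exponent two, it is a poset automorphism if and only if $\alpha(R_{ij})=(k_i+k_j)R_{ij}$ for each pair $i<j$ with $R_{ij}\neq\emptyset$; moreover it coincides with a $G$-translation precisely when $\alpha=\mathrm{id}$ and $k_1=k_2=k_3$. Any other valid choice of $(\alpha,k)$ thus witnesses $|\aut(P)|>|G|$. The symmetry facts driving the analysis are: for any nonempty $R\subseteq G$, the stabilizer $\mathrm{Stab}_G(R)$ has order $4$, $2$, or $1$ according to whether $|R|\in\{0,4\}$, $|R|=2$, or $|R|\in\{1,3\}$; and the stabilizer in $S_3=\aut(G)$ of the $G$-orbit $G\cdot R$ always contains a nontrivial transposition.

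I would then carry out a case analysis on the stabilizer profile of $(R_{12},R_{13},R_{23})$. When two of the $\mathrm{Stab}_G(R_{ij})$ share a nonzero element, the system admits a nonzero triple $(k_1,k_2,k_3)$ with $\alpha=\mathrm{id}$. When all three stabilizers are trivial (so all $|R_{ij}|\in\{1,3\}$), I pick a transposition $\alpha\in S_3$ fixing $t:=t_{12}+t_{13}+t_{23}\in G$, where $t_{ij}$ denotes the distinguished element of $R_{ij}$ (the unique element if $|R_{ij}|=1$, or the unique missing element if $|R_{ij}|=3$); then $m_{ij}:=\alpha(t_{ij})+t_{ij}$ satisfies $\alpha(R_{ij})=m_{ij}R_{ij}$ and $\sum m_{ij}=\alpha(t)+t=0$, so the $k_i$'s can be recovered. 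The intermediate mixed profiles yield to a similar argument using the transposition that preserves $G\cdot R_{ij}$ for each size-two $R_{ij}$. The residual ``rigid'' profile, in which two of the $R_{ij}$ are size-two subsets with distinct order-two stabilizer subgroups of $G$, is incompatible with $R_{12}R_{23}\subseteq R_{13}$ in height $2$; it thus occurs only in height $1$, where the transitivity constraint is vacuous and an orbit-swap between the two orbits at the same level (or the same $\aut(G)$-based construction, now freed of the transitivity constraint) provides the needed extra automorphism.

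The main obstacle is the bookkeeping of the case analysis, particularly verifying that the few residual rigid profiles are indeed ruled out by $R_{12}R_{23}\subseteq R_{13}$ in height $2$ and handled by orbit-swaps in height $1$. The key phenomenon that keeps the analysis tractable is that transitivity dramatically restricts which triples of ``incompatible'' $R_{ij}$ can coexist, so every configuration either admits the $\alpha=\mathrm{id}$ trick, falls under the size-one/three transposition construction, or degenerates into height $1$ where an extra orbit-swap symmetry becomes available.
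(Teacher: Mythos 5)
Your proposal is correct, and it takes a genuinely different route from the paper. The paper argues directly on the Hasse diagram: it splits by height, introduces ``long edges'' in the height-2 case, and leans on two earlier results --- Babai's Theorem \ref{DRR} (to kill the configuration where $0'$ is comparable only to $0$ and $0''$) and Proposition \ref{contraejemplos} (to kill the configurations that collapse to a two-orbit representation) --- finishing the few surviving diagrams with explicit involutions. You instead encode the whole poset by the three difference sets $R_{ij}\subseteq G$ with the single transitivity constraint $R_{12}+R_{23}\subseteq R_{13}$, and systematically look for affine automorphisms $g_i\mapsto (k_i+\alpha(g))_i$; the existence question becomes whether the system $\alpha(R_{ij})=(k_i+k_j)+R_{ij}$ has a solution other than $\alpha=\mathrm{id}$, $k_1=k_2=k_3$, which reduces to a short computation with stabilizers in $\Z_2^2\rtimes S_3$. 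I checked the profile analysis: the only profiles where the $\alpha=\mathrm{id}$ kernel is trivial are (all three stabilizers trivial), (exactly one nontrivial stabilizer), and (two order-two stabilizers with distinct generators, third trivial); your transposition construction with the elements $t_{ij}$ handles the first, the forced transposition fixing the generator of the unique order-two stabilizer handles the second (the key identity being that $\tau(t)+t$ always lands in $\{0,s\}$), and the third is simply impossible --- all three of its $R_{ij}$ are nonempty, so it forces height 2, where a coset of one order-two subgroup would have to sit inside a coset of a different one (or $R_{13}$ would have to be all of $G$). Your closing remark that this rigid profile ``occurs only in height 1 and is handled by an orbit-swap'' is therefore slightly off --- in height 1 one of $R_{12},R_{23}$ is empty, which changes the profile and restores a nontrivial $\alpha=\mathrm{id}$ solution, so no orbit-swap is ever needed --- but this is harmless. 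The trade-off: the paper's proof is shorter because it reuses Theorem \ref{DRR} and Proposition \ref{contraejemplos}, while yours is self-contained and more mechanical, and the affine-automorphism framework would adapt more readily to other small abelian groups such as $\Z_2^3$, $\Z_2^4$, $\Z_3^2$, which the paper explicitly declines to inspect.
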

 
The proof we give essentially analyzes all possible candidates of representations, and it is included at the end of this article. We do not inspect here the cases $\Z_2^3, \Z_2^4$ and $\Z_3^2$.

The idea for proving that the five groups in Theorem \ref{DRR} have no digraphical regular representation is to show that in each of those cases for every subset $S\subseteq G$ there exists an automorphism $\varphi\neq 1_G$ of $G$ such that $\varphi (S)=S$. Instead of applying this idea to our problem directly, we observe the following.

\begin{obs} \label{DRRremark}
If a group $G$ admits a Cayley representation $P(G,S)$, then it admits a digraphical regular representation. Define the digraph $\Gamma$ with vertex set $G$ and edges $(g,gs)$ for every $g\in G, s\in S$. Let $\varphi \in \aut(\Gamma)$. Define $\overline{\varphi}:P(G,S)\to P(G,S)$ by $\overline{\varphi}(g)=\varphi(g)$ and $\overline{\varphi}(g')=\varphi(g)'$ for every $g\in G$. It is easy to see that $\overline{\varphi}$ is an automorphism of $P(G,S)$, so by assumption it coincides with $L(g)$ for some $g\in G$. This implies that $\varphi$ corresponds to the left multiplication by $g$. Thus, $\Gamma$ is a digraphical regular representation of $G$. 
\end{obs}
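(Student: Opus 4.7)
The plan is to exhibit the Cayley digraph of $(G,S)$ itself as the required digraphical regular representation, and to transfer automorphisms to the poset $P(G,S)$ in order to use the Cayley-representation hypothesis. First I would let $\Gamma$ be the digraph with vertex set $G$ and a directed edge from $g$ to $gs$ for each $g\in G$ and $s\in S$. Left multiplication by any element of $G$ is clearly a digraph automorphism, so the left regular action gives a homomorphism $L\colon G\to\aut(\Gamma)$ whose image acts regularly on the vertex set. In particular $L$ is injective, and the only task left is to show $L$ is surjective.

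The key observation is that the edges of $\Gamma$ correspond exactly to the covering relations of $P(G,S)$: by definition of the Cayley poset, $g<h'$ iff $h\in gS$ iff $(g,h)$ is a directed edge of $\Gamma$. This dictionary is what lets me lift any $\varphi\in\aut(\Gamma)$ to a map $\overline{\varphi}\colon P(G,S)\to P(G,S)$ defined by $\overline{\varphi}(g)=\varphi(g)$ on the minimal copy $G$ and $\overline{\varphi}(g')=\varphi(g)'$ on the maximal copy $G'$. The map $\overline{\varphi}$ is a bijection (with inverse lifted from $\varphi^{-1}$), it sends minimals to minimals and maximals to maximals, and it preserves the covering relation precisely because $\varphi$ preserves the edge relation of $\Gamma$. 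Hence $\overline{\varphi}\in\aut(P(G,S))$.

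By the Cayley-representation hypothesis, every element of $\aut(P(G,S))$ is of the form $L(g)$ for some $g\in G$. Applying this to $\overline{\varphi}$ and restricting to the minimal orbit $G$ yields $\varphi(h)=gh$ for every $h\in G$, so $\varphi=L(g)$ as an automorphism of $\Gamma$. Thus $L$ is surjective and $\aut(\Gamma)\simeq G$ acts regularly on the vertices, making $\Gamma$ a digraphical regular representation.

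The only non-formal step is the covering-relation dictionary underlying the lift $\overline{\varphi}$, and this is immediate from the very definition of $P(G,S)$; so there is no genuine obstacle, and the remark reduces to packaging this dictionary cleanly.
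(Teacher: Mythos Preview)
Your proposal is correct and follows essentially the same route as the paper: you define the Cayley digraph $\Gamma$, lift an arbitrary $\varphi\in\aut(\Gamma)$ to $\overline{\varphi}\in\aut(P(G,S))$ via $\overline{\varphi}(g)=\varphi(g)$ and $\overline{\varphi}(g')=\varphi(g)'$, and then invoke the Cayley-representation hypothesis to force $\overline{\varphi}=L(g)$ and hence $\varphi=L(g)$. The only difference is that you spell out the edge/covering dictionary and the regularity of the left action explicitly, whereas the paper leaves these as evident.
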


The converse of this statement does not hold. Moreover, we have the following result.

\begin{prop} \label{contraejemplos}
The groups $G=\Z_3,\Z_4,\Z_5,\Z_6,\Z_7,\Z_2^2,\Z_2^3,\Z_2^4,\Z_3^2,S_3$ and $Q_8$ do not admit a semi-regular poset representation with two orbits.
\end{prop}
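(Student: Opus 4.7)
The plan is to combine a structural reduction with a finite case-analysis. Since all eleven listed groups are finite, the discussion in Section \ref{sectioncayley} shows that any semi-regular representation with two orbits of such a group has height $1$ and is therefore of the form $P(G,S)$ for some $\emptyset\neq S\subsetneq G$; so it suffices to prove that no such $S$ yields a Cayley representation. For the five groups $\Z_2^2,\Z_2^3,\Z_2^4,\Z_3^2$ and $Q_8$ this is immediate: by Theorem \ref{DRR} none of them admits a digraphical regular representation, and by Remark \ref{DRRremark} a Cayley representation induces one, so no such representation can exist.

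For the remaining six groups $\Z_3,\Z_4,\Z_5,\Z_6,\Z_7$ and $S_3$, I would proceed by direct inspection. Proposition \ref{pnueva} together with Remark \ref{generan} shrinks the list considerably: one may assume $e\in S$, require that $S$ generate $G$ (otherwise $P(G,S)$ is disconnected and cannot represent $G\neq 1,\Z_2$), and work with representatives under the joint action of right-translation, $\aut(G)$, and complementation, which leaves only a handful of candidates per group. For each such $S$ the main tool is the following criterion: if there exist $\psi\in\aut(G)\smallsetminus\{\mathrm{id}\}$ and $h\in G$ with $\psi(S)=Sh$, then the map $\varphi:P(G,S)\to P(G,S)$ defined by $\varphi(g)=\psi(g)$ and $\varphi(g')=(\psi(g)h^{-1})'$ is a poset automorphism, since $\psi(g)^{-1}\psi(k)h^{-1}\in S \Leftrightarrow \psi(g^{-1}k)\in Sh \Leftrightarrow g^{-1}k\in S$; and it does not belong to $\mathrm{Im}(L)$, as $\varphi=L(a)$ would force $a=e$ (evaluating at $g=e$) and hence $\psi=\mathrm{id}$. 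For instance, every two-element subset $S=\{a,b\}\subseteq\Z_n$ with $n>2$ satisfies $-S=S-(a+b)$, so the criterion applies with $\psi=-\mathrm{id}$; most subsets of the cyclic groups in our list fall under this heading.

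The main obstacle is that the criterion can fail for isolated subsets, most notably $S=\{0,1,3\}\subseteq\Z_6$, where the only non-trivial automorphism of $\Z_6$ is $x\mapsto -x$ and it sends $S$ to a set that is not a translate of $S$. For such outliers one must construct the extra automorphism of $P(G,S)$ by hand: in the $\Z_6$ example, inspection of the cover relations reveals that the bijection acting as $(0\;3)(2\;5)$ on the orbit of minima and as $(0'\;3')(1'\;4')$ on the orbit of maxima preserves all comparabilities, and since it induces different permutations on the two orbits it cannot coincide with any $L(a)$. Analogous ad hoc arguments, read off from the explicit cover-relations of $P(G,S)$, close the remaining sporadic subsets for $\Z_6$, $\Z_7$ and $S_3$; the verifications are routine but tedious, which is why they are deferred to the appendix.
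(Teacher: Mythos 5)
Your proposal is correct, and its skeleton matches the paper's: reduce to Cayley posets via the height-$1$ observation for finite groups, kill $\Z_2^2,\Z_2^3,\Z_2^4,\Z_3^2,Q_8$ by Theorem \ref{DRR} and Remark \ref{DRRremark}, and finish the remaining groups by a finite case analysis normalized through Proposition \ref{pnueva} and Remark \ref{generan}. Where you genuinely diverge is in how the case analysis is carried out. The paper simply cites \cite[Corollary 12]{BB} for $\Z_3,\Z_4,\Z_5,\Z_7$ and then exhibits explicit involutions one subset at a time for $\Z_6$ and $S_3$; you instead isolate a uniform criterion --- if $\psi(S)=Sh$ for some non-trivial $\psi\in\aut(G)$ and some $h$, then $g\mapsto\psi(g)$, $g'\mapsto(\psi(g)h^{-1})'$ is an automorphism of $P(G,S)$ outside $\mathrm{Im}(L)$ --- which is the classical digraphical-regular-representation obstruction transported to Cayley posets, and which I checked is stated and applied correctly. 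This buys you a self-contained proof (no appeal to \cite{BB}) and disposes of all two-element subsets at once via $-S=S-(a+b)$; one can check that it in fact covers \emph{every} admissible subset of $\Z_3,\Z_4,\Z_5,\Z_7$ and both normalized subsets of $S_3$ (for $S=\{e,(12),(123)\}$ take $\psi$ to be conjugation by $(23)$ and $h=(132)$), so the only case requiring a hand-built automorphism is $S=\{0,1,3\}\subseteq\Z_6$, and the involution you give there ($(0\,3)(2\,5)$ on minima, $(0'\,3')(1'\,4')$ on maxima) does preserve the cover relations and is fixed-point-free on neither orbit, hence not a translation. The only blemish is the deferral of ``remaining sporadic subsets'' to a non-existent appendix; since the verifications above show there are none beyond the $\Z_6$ case you already treat, this is cosmetic rather than a gap, but you should say so explicitly rather than wave at it.
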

\begin{proof}
The cases $\Z_3,\Z_4,\Z_5$ and $\Z_7$ are covered by Corollary 12 of \cite{BB}. For $\Z_2^2,\Z_2^3,\Z_2^4,\Z_3^2, Q_8$ the result follows from Theorem \ref{DRR} and Remark \ref{DRRremark}. Suppose then that $G=\Z_6=\{0,1,2,3,4,$ $5\}$. Assume there exists $\emptyset \neq S \subseteq \Z_6$ such that $\aut(P(G,S))$ is a Cayley representation of $\Z_6$. Then $S\neq \Z_6$ and by Proposition \ref{pnueva}, we may assume that $|S|\le 3$ and that $0\in S$. By Remark \ref{generan}, $P(G,S)$ is connected. If $|S|=2$, $P(G,S)$ is a crown, and there is an involution fixing $0$. Therefore we may assume $|S|=3$, and by Proposition \ref{pnueva} there are only two representatives to analyze: $S=\{5,0,1\}$, $\{0,1,3\}$. In the first case the involution $k\to -k$, $k'\to (-k)'$ fixes $0$. In the second case there is an involution which transposes $2$ with $5$ and $0'$ with $3'$, and fixes any other point. 

Finally, suppose $\emptyset \neq S \subsetneq S_3$ is such that $P(S_3,S)$ is a Cayley representation of $S_3=\{e,(12),(13),(23),(123),(132)\}$. Again by Proposition \ref{pnueva} we may assume that $|S|\le 3$ and that $e\in S$. Since $S_3$ is not cyclic, by Remark \ref{generan}, $|S|=3$. By applying an automorphism of $S_3$ there are only two cases to analyze $S=\{e,(12),(13)\}, \{e,(12),(123)\}$. In the first case there is an involution which transposes $(12)$ with $(13)$, $(123)$ with $(132)$, $(12)'$ with $(13)'$ and $(123)'$ with $(132)'$. In the second, there is an involution transposing $(12)$ with $(23)$, $(123)$ with $(132)$, $e'$ with $(123)'$ and $(13)'$ with $(23)'$. 
\end{proof}

\section{Girth of Cayley graphs and representations with two orbits}

Let $G$ be a group and let $\emptyset \neq S\subseteq G$ be a subset. For $g\in G$ we define the \textit{$S$-neighborhood} of $g$ by $N_S(g)=gSS^{-1}=\{gst^{-1} | \ s,t\in S\} \subseteq G$. Note that two minimal elements $g,h\in G$ of the Cayley poset $P=P(G,S)$ have a common upper bound if and only if $h\in N_S(g)$. Thus, if $\varphi \in \aut(P)$, $\varphi (N_S(g))=N_S(\varphi(g))$ for every $g\in G$. Given $g,h\in G$, their \textit{affinity} is $\alpha(g,h)=\# (N_S(g)\cap N_S(h))$ (this could be infinite if $S$ is). Then $\alpha(\varphi(g),\varphi(h))=\alpha(g,h)$ for every automorphism $\varphi$.

\begin{ej} \label{ciclico}
We claim that if $n\ge 9$ and $S=\{0,1,3\}\subseteq \Z_n$, then $P=P(\Z_n,S)$ is a Cayley representation of $\Z_n$. Let $\varphi \in \aut(P)$ be an automorphim which fixes $0\in P$. We only need to prove that $\varphi$ is the identity $1_P$.  

It is easy to see that for $i\in \Z_n$, $N_S(i)=\{i-3,i-2,i-1,i,i+1,i+2,i+3\}$. Since $n\ge 7$, $\#N_S(i)=7$ for every $i$. Since $n\ge 8$, $\alpha(i,i+1)=6$ for every $i$. Since $n\ge 9$, $\alpha(i,j)=6$ only when $j\in \{i-1,i+1\}$. Since $\varphi$ preserves affinity, and $0$ is fixed, then $\varphi(1)\in \{-1,1\}$. However, if $\varphi(1)=-1$, by induction $\varphi(i)=-i$ for every $i$. This is a contradiction as $\{0,2,3\}$ has an upper bound, while $\{0,-2,-3\}$ does not. Thus $\varphi(1)=1$ and by induction $\varphi(i)=i$ for every $i$. Each maximal element $i'$ is uniquely determined by its smaller elements $i,i-1,i-3$. Thus $\varphi(i')=i'$ for every $i$.
\end{ej}

Note that $9$ is three times the diameter of $S=\{0,1,3\}$ in the Cayley graph $\Gamma(\Z_n, \{1\})$. Also note that the argument in Example \ref{ciclico} can be used with no changes to prove that $\Z$ admits a Cayley representation.

In \cite{BB} it is proved that $\Z_8$ also admits a Cayley representation. A similar argument is used with $S=\{0,1,2,4\}$ and a different version of the notion of $S$-neigborhood. In our next result both choices of $S$ will be used simultaneously to represent groups generated by two elements.

Suppose $\{x,y\}$ is a set of generators of a group $G$, and let $\Gamma=\Gamma (G,\{x,y\})$ be the corresponding Cayley graph, i.e. the undirected graph with vertex set $G$ and an edge with vertices $g,gx$ and another with vertices $g,gy$ for each $g\in G$ (in particular $\Gamma$ has loops if any generator is the identity and it has multiple edges if one generator has order two or if it  equals the other or its inverse). Once again, recall that the girth of a graph is the smallest length of a cycle (infinite if the graph is a forest), so the girth $\g(\Gamma)$ of $\Gamma$ is $r$ if and only if there is a non-trivial word $w$ of length $r$ in the letters $x,y$ which is trivial in $G$, and every non-trivial word of length smaller than $r$ is non-trivial in $G$. The girth of $\Gamma$ measures how similar the Cayley graphs $\Gamma$ and $\Gamma(F_2,\{X,Y\})$ are locally, where $F_2=F(X,Y)$ denotes the free group freely generated by $X$ and $Y$. Namely, if $\g(\Gamma)\ge 2r+2$, then the balls $B_r(\Gamma), B_r(\Gamma(F_2,\{X,Y\}))$ of radius $r$ with center in $e$ in these two graphs (i.e. the subgraphs induced by the vertices whose distance to $e$ is at most $r$) are isomorphic. 

\begin{teo} \label{main}
Let $G$ be a non-necessarily finite group generated by two elements $x,y$. Suppose that for every non-trivial word $w\in F_2$ with length smaller than or equal to $21$, $w(x,y)\neq e\in G$, that is $\g(\Gamma(G,\{x,y\}))>21$. Then $G$ admits a Cayley representation. 
\end{teo}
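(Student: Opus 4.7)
The plan is to exhibit a suitable finite subset $S \subseteq G$, built out of the generators $x$ and $y$, and to verify that $P(G,S)$ is a Cayley representation. By the characterization at the beginning of Section \ref{sectioncayley}, it suffices to show that any automorphism $\varphi \in \aut(P(G,S))$ with $\varphi(e) = e$ is the identity on the minimal points; the action on the maximal points is then forced, since each $g' \in P(G,S)$ is uniquely determined by the set of minimal elements below it.

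As in Example \ref{ciclico}, the main invariant will be the affinity $\alpha(g,h) = \#(N_S(g) \cap N_S(h)) = \#(gSS^{-1} \cap hSS^{-1})$, which is preserved by every automorphism of $P(G,S)$. The role of the girth hypothesis is that if $\g(\Gamma(G,\{x,y\})) > 2r$, then the ball of radius $r$ around any vertex of $\Gamma(G,\{x,y\})$ is isomorphic as a labelled graph to the ball of radius $r$ in $\Gamma(F_2,\{X,Y\})$, so every affinity computation within that range can be carried out once and for all in the free group $F_2$. The remark just before the statement of the theorem is precisely what allows this transfer. Following the hint that ``both choices of $S$ will be used simultaneously'', I would take an asymmetric $S$ which combines the pattern $\{0,1,3\}$ used for $x$ with the pattern $\{0,1,2,4\}$ used for $y$, for instance $S = \{e, x, x^{3}, y, y^{2}, y^{4}\}$, so that $S$ has diameter at most $7$ in $\Gamma(G,\{x,y\})$ and $SS^{-1}$ sits inside the ball of radius $7$ around $e$, well within the free‑like region given by girth greater than $21$.

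The key step is to show, by an explicit calculation in $F_2$, that the affinity function $g \mapsto \alpha(e,g)$ attains its strict maximum at $g = e$, and that the next largest values are realized exactly at $x, x^{-1}, y, y^{-1}$, with four mutually distinct values telling these apart. The fact that in $S$ the $x$-pattern uses $\{1,3\}$ but the $y$-pattern uses $\{1,2,4\}$, together with the absence of $x^{-1}$ and $y^{-1}$ from $S$, is what breaks both the $x \leftrightarrow y$ symmetry and the inversion symmetry $g\mapsto g^{-1}$. Hence $\varphi$ must fix each of the four neighbours of $e$ in $\Gamma(G,\{x,y\})$. A routine induction on word length in $\{x^{\pm 1}, y^{\pm 1}\}$ then shows $\varphi(g) = g$ for every $g \in G$: at each inductive step $\varphi(g_0 s)$ is identified among the neighbours of $g_0$ in the Cayley graph by its affinity profile with respect to $g_0$ and the already fixed elements in the surrounding free ball.

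The main obstacle is the precise bookkeeping that ties the constant $21$ to the diameter of $S$ and to the short configurations needed to distinguish the four generator neighbours from each other. Specifically, one needs that, within the girth-$22$ free region around $e$, every element $g$ with $\alpha(e,g)$ as large as $\alpha(e,x)$ is forced to lie among $\{x^{\pm 1}, y^{\pm 1}\}$; this is the kind of verification that runs through all words in $F_2$ of length at most roughly $2 \cdot 7 = 14$, and possibly a few longer comparisons (of length $\le 21$) involving triples or quadruples of minimal elements sharing a common upper bound in $P(G,S)$. Once the affinity/common-upper-bound computations in $F_2$ are done with the specific $S$, the inductive extension to all of $G$ and the treatment of the maximal points are straightforward.
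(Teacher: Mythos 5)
Your overall strategy coincides with the paper's: the proof there takes $S=\{e,x,x^2,x^4,y,y^3\}$ (your $S=\{e,x,x^3,y,y^2,y^4\}$ is the same choice with the roles of $x$ and $y$ swapped), transfers all computations to $F_2$ via the girth hypothesis, and uses the affinity $\alpha(e,g)=\#(N_S(e)\cap N_S(g))$ to locate the images of the generators. However, the pivotal claim in your third paragraph is false: no choice of $S$ can make $\alpha(e,x)$, $\alpha(e,x^{-1})$, $\alpha(e,y)$, $\alpha(e,y^{-1})$ ``four mutually distinct values.'' Since $\alpha$ is preserved by left translation, $\alpha(e,g)=\alpha(g^{-1}e,g^{-1}g)=\alpha(g^{-1},e)=\alpha(e,g^{-1})$ for \emph{every} $g$ and \emph{every} $S$; concretely, $h\mapsto g^{-1}h$ is a bijection from $SS^{-1}\cap gSS^{-1}$ onto $SS^{-1}\cap g^{-1}SS^{-1}$. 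So the asymmetry of $S$ (absence of $x^{-1}$, $y^{-1}$) cannot break the inversion symmetry at the level of affinities with a single fixed basepoint, and your conclusion ``hence $\varphi$ must fix each of the four neighbours of $e$'' does not follow: affinity only yields $\varphi(x)\in\{x,x^{-1}\}$ and $\varphi(y)\in\{y,y^{-1}\}$. Indeed the paper computes $\alpha(e,x)=\alpha(e,x^{-1})=12$ and $\alpha(e,y)=\alpha(e,y^{-1})=9$.

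The missing idea is a \emph{different} automorphism invariant to rule out $\varphi(x)=x^{-1}$ and $\varphi(y)=y^{-1}$: the existence of a common upper bound for a specific finite set of minimal points. In the paper, if $\varphi(x)=x^{-1}$ then an induction along the $x$-line gives $\varphi(x^n)=x^{-n}$, and one checks (using only that the girth exceeds a small bound) that $\{e,x^2,x^3,x^4\}$ has an upper bound in $P(G,S)$ (namely $(x^4)'$) while $\{e,x^{-2},x^{-3},x^{-4}\}$ has none; similarly $\{e,y^2,y^3\}$ versus $\{e,y^{-2},y^{-3}\}$. You do mention ``comparisons involving triples or quadruples of minimal elements sharing a common upper bound,'' but only as an auxiliary check that high-affinity elements lie in $\{x^{\pm1},y^{\pm1}\}$, not as the mechanism that breaks inversion --- which is where it is actually indispensable. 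The same issue would recur in your inductive step (knowing $\varphi(g_0)=g_0$ does not by itself distinguish $\varphi(g_0x)=g_0x$ from $g_0x^{-1}$); the paper resolves it by first pinning down $x$ and $y$ via the upper-bound argument and then propagating along $x$- and $y$-lines using affinities between two consecutive already-fixed points together with injectivity.
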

\begin{proof}
Let $S=\{e,x,x^2,x^4,y,y^3\}$ and let $P=P(G,S)$. Let $\varphi$ be an automorphism of $P$ fixing the minimal point $e\in P$. We want to prove that $\varphi=1_P$. Note that $N_S(e)=SS^{-1}$ consists of the elements $x^{-4},x^{-3},x^{-2},x^{-1},e,x,x^2,x^3,$ $x^4,y^{-3},y^{-2},$ $y^{-1},y,y^2,$ $y^3,xy^{-1},$ $xy^{-3},$ $x^2y^{-1},x^2y^{-3},x^4y^{-1},$ $x^4y^{-3}, yx^{-1},yx^{-2},yx^{-4}, y^3x^{-1},y^3x^{-2},y^3x^{-4}$. These 27 elements are different since the girth $\g(\Gamma)$ of $\Gamma=\Gamma(G,\{x,y\})$ is greater than 14. The ball $B_7(\Gamma)$ with center $e$ and radius $7$ in $\Gamma$ is isomorphic to the ball $B_7(\Gamma(F_2,\{X,Y\}))$ since $\g(\Gamma)\ge 16$. This ball is then a tree. In Figure \ref{cay} we have depicted the smallest connected subgraph of $B_7(\Gamma)$ which contains all the 27 vertices corresponding to elements in $N_S(e)$.

\begin{figure}[h] 
\begin{center}
\includegraphics[scale=0.45]{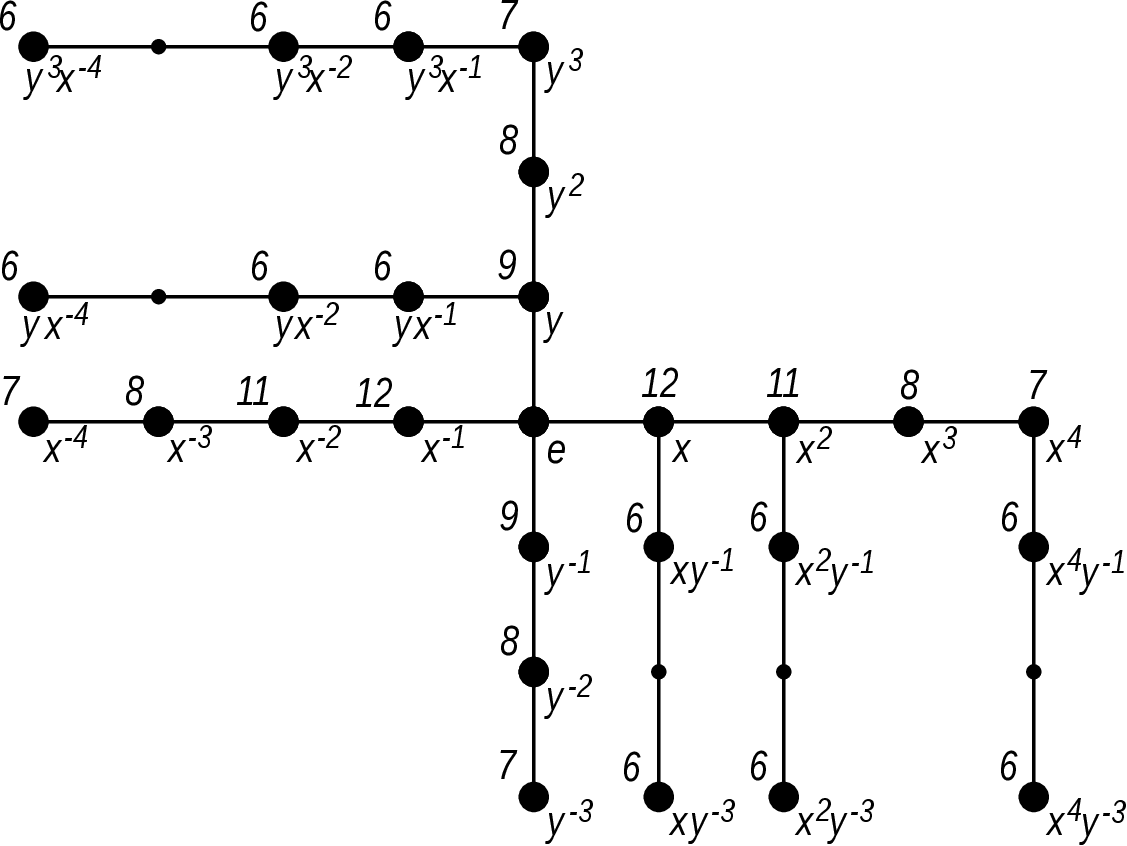}
\caption{The smallest connected subgraph of $B_7(\Gamma)$ containing the vertices of $N_S(e)$.}\label{cay}
\end{center}
\end{figure}

The 27 vertices corresponding to the points in $N_S(e)$ are represented with big dots, while there are 5 vertices in the picture which are not in $N_S(e)$, represented with small dots. In each vertex $e\neq g\in N_S(e)$ we have indicated the affinity $\alpha(e,g)=\#(N_S(e)\cap N_S(g))$. This can be computed in each of the 26 cases algebraically, using that no non-trivial word of length smaller than or equal to $21$ is trivial in $G$, or graphically, using that $\g(\Gamma)>21$. For the first alternative, note that an element $h\in N_S(e)$ is represented by a word $w\in F_2$ with total exponent smaller than or equal to 7, while an element $h'\in N_S(g)$ is represented by a word $w'$ of exponent at most 7+7=14. Thus $h=h'\in G$ if and only if $w=w'\in F_2$. For the second, graphical, alternative, note that if $g$ is represented by a word $u\in F_2$ of exponent at most $7$, then the union of $B_7(\Gamma(F_2, \{X,Y\}))$ and the ball $uB_7(\Gamma(F_2, \{X,Y\}))$ of center $u$ is a subgraph of $\Gamma(F_2,\{X,Y\})$ of diameter at most 21. Thus, this graph is isomorphic to the (non-necessarily induced) subgraph $B_7(\Gamma)\cup gB_7(\Gamma)$ of $\Gamma$, so $\alpha(e,g)$ can be computed in $\Gamma(F_2,\{X,Y\})$.  

Since $\varphi(e)=e$, $\varphi(N_S(e))=N_S(e)$. Among the points in $N_S(e)$, there are only two, $x^{-1},x$ whose affinity with $e$ is $\alpha(e,x^{-1})=\alpha(e,x)=12$. Thus $\varphi(x)\in \{x^{-1},x\}$. If $\varphi(x)=x^{-1}$, then $\varphi(N_S(x))=N_S(x^{-1})$ and $\alpha(\varphi(x^2),x^{-1})=\alpha(x^2,x)=12$. This implies that $\varphi(x^2)\in \{x^{-2},e\}$. Since $\varphi$ is injective, $\varphi(x^2)=x^{-2}$. By induction $\varphi(x^n)=x^{-n}$ for every $n\ge 0$. The set $\{e,x^2,x^3,x^4\}$ of minimal points of $P$ has an upper bound. However, $\varphi(\{e,x^2,x^3,x^4\})=\{e,x^{-2},x^{-3},x^{-4}\}$ does not. Indeed, if $g'\in P$ was an upper bound, then $\{e,x^{-2},x^{-3},x^{-4}\}\subseteq P_{\le g'}=\{g,gx^{-1},gx^{-2},gx^{-4},gy^{-1},gy^{-3}\}$. In particular $e$ must be contained in the later, so $g\in \{e,x,x^2,x^4,y,y^3\}=S$. But for any of these six cases we see that either $x^{-2}$ or $x^{-3}$ is not contained in $P_{\le g'}$, using that $\g(\Gamma)\ge 10$. We conclude then that $\varphi(x)=x$, and by an inductive argument, that $\varphi(x^n)=x^n$ for every $n\ge 0$. Similarly, $\varphi(x^{-n})=x^{-n}$ for every $n\ge 0$ (when $G$ is infinite, and moreover, $x$ is not of finite order, this does not follow from the previous claim). Moreover, this argument shows that if $g\in P$ is fixed by $\varphi$ for some $g\in G$, then $gx^n\in P$ is fixed for every $n\in \Z$. Among the points in $N_S(e)$ (we only care about points in $N_S(e)$ which are not powers of $x$, in fact), only $y^{-1}$ and $y$ have affinity equal to $9$ with $e$. Thus $\varphi (y)\in \{y^{-1},y\}$. As above, if $\varphi(y)=y^{-1}$, then $\varphi(y^n)=y^{-n}$ for every $n\ge 0$, and this yields a contradiction since $\{e,y^2,y^3\}\subseteq P$ has an upper bound, while $\{e,y^{-2},y^{-3}\}$ does not. Indeed, either $y^{-2}$ or $y^{-3}$ is not contained in $P_{\le g'}$ for each $g\in S$, as $\g(\Gamma)\ge 10$. Therefore, $\varphi(y)=y$, and by induction $\varphi(y^n)=y^n$ for every $n\ge 0$. The same holds for $n\le 0$. Moreover, for every $g\in G$, if $\varphi(g)=g$, then $\varphi(gy^n)=gy^n$ for every $n\in \Z$. Now, since $\{x,y\}$ generates $G$, all the minimal points of $P$ are fixed by $\varphi$. Finally, every maximal point of $P$ is determined by the set of points it covers. Concretely, $e'$ is the unique upper bound of $\{e,y^{-1}\}\subseteq P$, since for any $e\neq g\in S$, $y^{-1}$ does not belong to $P_{\le g'}$, using that $\g(\Gamma)\ge 9$. Moreover, for each $g\in G$, $g'$ is the unique upper bound of $\{g,gy^{-1}\}$. Since the later is invariant, $g'$ is also fixed. 
\end{proof}

\begin{obs}
Note than in the proof of Theorem \ref{main} we do not really need all the non-trivial words $w(X,Y)$ of length at most 21 to represent non-trivial elements of $G$, but just a concrete list with much fewer words. This could be used to obtain representations of examples not covered by the theorem. Also, even if some of the words in this list are trivial in $G$, a similar proof could work as long as we have some control on the number of words wich are indeed trivial. For example, the fact that $\Z^2$ has a Cayley representation follows from a variant of this type.
\end{obs}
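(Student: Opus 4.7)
The plan is to justify both parts of the remark by examining the proof of Theorem \ref{main}, isolating the exact word-theoretic content it uses, and then carrying out the promised variant for $G=\Z^2$. For the first assertion, I would extract from that proof an explicit finite list $\mathcal{L}$ of elements of $F_2$ whose non-triviality is the only hypothesis actually invoked. The entries of $\mathcal{L}$ come from three sources: (i) the 27 products $st^{-1}$ with $s,t\in S=\{e,X,X^2,X^4,Y,Y^3\}$ listed in the proof of Theorem \ref{main}, which must remain pairwise distinct in $G$ so that $|N_S(e)|=27$; (ii) the word comparisons used to compute the affinities $\alpha(e,g)$---the proof only depends on the relative ordering of these numbers, so only the comparisons witnessing the strict inequalities that separate $\{x,x^{-1}\}$ and $\{y,y^{-1}\}$ from the other elements of $N_S(e)$ are required; and (iii) the finitely many words appearing in the upper-bound arguments for $\{e,x^2,x^3,x^4\}$ versus $\{e,x^{-2},x^{-3},x^{-4}\}$, for $\{e,y^2,y^3\}$ versus $\{e,y^{-2},y^{-3}\}$, and for the uniqueness of $e'$ as an upper bound of $\{e,y^{-1}\}$. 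The girth condition $\g(\Gamma(G,\{x,y\}))>21$ is a uniform but wasteful way of forcing all of $\mathcal{L}$; any $G$ satisfying the (much shorter) list $\mathcal{L}$ admits a Cayley representation by the same proof.

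The second assertion---that $\Z^2$ admits a Cayley representation---is the substantive part. Writing $\Z^2$ additively with $x=(1,0)$ and $y=(0,1)$ and taking the same $S=\{e,x,x^2,x^4,y,y^3\}$, I would prove that $P(\Z^2,S)$ is a Cayley representation by imitating the proof of Theorem \ref{main} step by step, replacing every appeal to the free group $F_2$ (which is no longer legitimate, since $\g(\Gamma(\Z^2,\{x,y\}))=4$) by a direct computation inside $\Z^2$. Many entries of $\mathcal{L}$ fail in $\Z^2$ because of commutativity (for instance every commutator is trivial), so one cannot simply invoke Theorem \ref{main}; the claim is that the failing words are harmless for the distinctness, affinity and upper-bound checks needed by the proof.

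Concretely the steps would be: verify that the 27 differences $s-t$ with $s,t\in S$ are pairwise distinct in $\Z^2$ (immediate from the coordinates of $S$); for each non-identity $g\in N_S(e)$ compute $\alpha(e,g)=|N_S(e)\cap(N_S(e)+g)|$ by direct intersection, and confirm that $\alpha(e,x^{\pm 1})$ is strictly the largest of these 26 values and that $\alpha(e,y^{\pm 1})$ is strictly the largest among the $g$ which are not powers of $x$ (the numerical values differ from the free-group case, because abelian coincidences inflate some of them, but only the ordering matters); rule out $\varphi(x)=x^{-1}$ by exhibiting $(4,0)'$ as an upper bound of $\{e,x^2,x^3,x^4\}$ and noting that no $h\in S$ has $h+(3,0)\in S$, so $\{e,x^{-2},x^{-3},x^{-4}\}$ has no upper bound; run the symmetric argument in $y$; conclude that $\varphi$ fixes every element of $\Z^2$, since $\{x,y\}$ generates it; and finally identify each $g'$ as the unique upper bound of $\{g,g-y\}$, which reduces to checking that $h=(0,0)$ is the only element of $S$ with $h+(0,1)\in S$.

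The main obstacle is the loss of the tree structure of $B_7(\Gamma(F_2,\{X,Y\}))$, which in Theorem \ref{main} gave a uniform treatment of all 26 affinity computations. In $\Z^2$ each $\alpha(e,g)$ must be computed by a direct intersection count in a 27-element set; this is routine but bookkeeping-heavy. A complementary subtlety is that one must check that the many abelian coincidences among short words do not fuse two of the 27 elements of $N_S(e)$ or destroy the strict inequalities among the affinities that isolate $\{x,x^{-1}\}$ and $\{y,y^{-1}\}$. Both verifications are finite and tractable, and once they go through the proof of Theorem \ref{main} transfers essentially word for word.
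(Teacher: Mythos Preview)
Your proposal is correct and is precisely the kind of argument the paper has in mind; note that the paper states this remark without proof, so there is no detailed argument to compare against beyond the one-line hint that ``a variant of this type'' works for $\Z^2$. Your plan---keep $S=\{e,x,x^2,x^4,y,y^3\}$, verify directly in $\Z^2$ that the 27 differences are distinct, recompute the affinities (which do change: e.g.\ $\alpha(e,x)=16$ and $\alpha(e,y)=12$ rather than $12$ and $9$), check that $\{x,x^{-1}\}$ and then $\{y,y^{-1}\}$ are still isolated by these values, and reuse the upper-bound contradictions verbatim---is exactly the intended variant, and all the finite checks go through.
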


Generalizations of Theorem \ref{main} for groups with $d\ge 3$ generators can be obtained with similar methods by combining $d$ generator sets $S,S',\ldots ,S^{(d)}$ of $\Z$ as we did for $d=2$. In this article we restrict ourselves to the case of two generators.

As a first example, we can apply Theorem \ref{main} to $G=F_2$.

The construction of regular graphs, and specifically Cayley graphs, with large girth, has a long history, with application to low density codes, among others. A simple counting argument due to Moore implies that a regular graph $\Gamma$ of degree 4 and girth $\g(\Gamma)>21$ has at least $3^{11}-1$ vertices. Thus, Theorem \ref{main} can only be applied to groups of order at least that number.   

In \cite{Mar} Margulis proves that if $p$ is a prime and $G=SL_2(\Z_p)$, then $x=\binom{1 \ 2}{0 \ 1}$, $y=\binom{1 \ 0}{2 \ 1}$ generate $G$ and $\g(\Gamma(G, \{x,y\}))\ge 2 \log_{1+\sqrt{2}}(\frac{p}{2})-1$. Thus, for $p>2(1+\sqrt{2})^{11}$, $SL_2(\Z_p)$ admits a Cayley representation. For $PGL_2(\Z_p)$ and $PSL_2(\Z_p)$ there are similar results by Lubotzky, Phillips and Sarnak \cite{LPS}, but their Cayley graphs are constructed with $\frac{q+1}{2}$ generators, where $q$ is a prime congruent to $1$ modulo $4$.

%

\section{Representability of finite simple groups}

In \cite{Wei} Weigel proved the following result, answering a question originally raised by Magnus:

\begin{teo}(Weigel) \label{weigel}
If $\mathcal{C}$ is an infinite family of isomorphic types of non-abelian finite simple groups, then $F_2=F(X,Y)$ is residually $\mathcal{C}$. That is, the intersection of all the normal subgroups $N\trianglelefteq F_2$ such that $F_2/N\in \mathcal{C}$, is trivial.
\end{teo}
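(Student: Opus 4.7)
The plan is to apply the Classification of Finite Simple Groups (CFSG) to reduce the statement to a case analysis on the family of groups in $\mathcal{C}$. Given a non-trivial element $w \in F_2$, it suffices to produce, for infinitely many $S \in \mathcal{C}$, a surjective homomorphism $\varphi : F_2 \twoheadrightarrow S$ with $\varphi(w) \neq e$. By CFSG, after discarding finitely many sporadic exceptions and using the hypothesis that all groups in $\mathcal{C}$ share an isomorphism type, $\mathcal{C}$ falls into one of the following families: alternating groups $A_n$ with $n$ unbounded, or groups of a fixed Lie type defined over finite fields $\mathbb{F}_q$, with either the rank or the field size tending to infinity.

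For the alternating case, I would combine Dixon's probabilistic generation theorem, which asserts that the proportion of pairs $(x,y) \in A_n \times A_n$ generating $A_n$ tends to $1$, with a vanishing estimate on the word map: for any fixed non-trivial $w \in F_2$, the proportion of pairs $(x,y) \in A_n \times A_n$ with $w(x,y) = e$ tends to $0$. The vanishing estimate can be derived via character-theoretic bounds on fiber sizes of word maps, or by a direct combinatorial argument that exploits the fact that $w$ imposes a non-trivial algebraic relation on the generators. Together these two facts yield pairs that both generate $A_n$ and witness $w \neq e$.

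For groups of Lie type, the strategy is to realize $F_2$ as a Zariski-dense subgroup of a suitable algebraic group $\mathbf{G}$ defined over $\mathbb{Z}$, so that the non-trivial element $w$ has non-trivial image in $\mathbf{G}(\mathbb{Z})$. Reduction modulo a prime $p$ yields an element in $\mathbf{G}(\mathbb{F}_p)$ which, for all but finitely many $p$, remains non-trivial; strong approximation for the ambient arithmetic group then ensures that the image of the two chosen generators is a generating pair for the corresponding finite simple quotient. Choosing $p$ (or, for fixed $q$ and growing rank, choosing the rank) so that this quotient lies in $\mathcal{C}$ provides the required homomorphism.

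The main obstacle is the Lie-type case: one must handle uniformly all isogeny classes and twisted forms, secure Zariski-density of the chosen $2$-generated subgroup in each $\mathbf{G}$, and manage small-field anomalies where strong approximation or density can fail. The technical heart of Weigel's proof is therefore the construction, type by type, of two-generator subgroups of each algebraic group that are both Zariski-dense and map surjectively onto cofinally many finite quotients in $\mathcal{C}$; the alternating case, while easier, still requires genuine probabilistic or character-theoretic input.
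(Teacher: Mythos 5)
First, a point of comparison: the paper does not prove this statement at all. Theorem \ref{weigel} is quoted from Weigel's article \cite{Wei} as a known result (with the remark that a stronger theorem appears in \cite{DPSS}), so your proposal can only be measured against the literature, not against an in-paper argument.

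As a proof plan, your sketch follows a recognized route --- in fact it is closer in spirit to the later probabilistic approach of Dixon--Pyber--Seress--Shalev than to Weigel's original, largely constructive, case-by-case analysis --- but it has genuine gaps. (i) In the alternating case, the assertion that the proportion of pairs $(x,y)\in A_n^2$ with $w(x,y)=e$ tends to $0$ for every fixed non-trivial $w$ is not a routine lemma: it is essentially the main content of \cite{DPSS} and its refinements, and neither ``character-theoretic bounds on fiber sizes'' nor ``a direct combinatorial argument'' delivers it off the shelf; as written you are invoking a result at least as deep as the one to be proved. (For residual purposes one only needs a single generating pair with $w(x,y)\neq e$ in a single $A_n$, and for that an elementary construction in the style of Katz--Magnus suffices, so the probabilistic machinery is both too strong and unjustified here.) (ii) In the Lie-type case, Zariski density plus strong approximation applies to a fixed ambient algebraic group $\mathbf{G}$ with the residue field varying, i.e.\ bounded rank and $q\to\infty$; it says nothing about the complementary case of fixed field and unbounded rank (e.g.\ $\mathcal{C}=\{\mathrm{PSL}_n(2)\}_{n\ge 3}$), where there is no single ambient $\mathbf{G}$, and it does not directly reach the Suzuki and Ree families, which are not groups of $\mathbb{F}_q$-points of an algebraic group in the naive sense. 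You acknowledge these as ``the technical heart,'' but that is precisely the part of the theorem that remains unproved. Finally, the CFSG reduction needs a pigeonhole step: an infinite set of pairwise non-isomorphic simple groups need not lie entirely in one family, it only contains an infinite subfamily that does; and the phrase ``all groups in $\mathcal{C}$ share an isomorphism type'' inverts the hypothesis, which is that the members of $\mathcal{C}$ are pairwise non-isomorphic.
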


In fact, this result holds for $F_d$, $d\ge 2$. A much stronger result is proved by Dixon, Pyber, Seress and Shalev in \cite{DPSS} using probabilistic methods. Although our result in this section (Corollary \ref{corosimple}) follows from  \cite[Theorem 3]{DPSS} (and Theorem \ref{main}), we choose an approach using only Weigel's result. As explined in \cite{DPSS}, Weigel's Theorem implies 

\begin{teo} \label{weigel2}
If $\mathcal{S}$ is an infinite family of non-isomorphic non-abelian finite simple groups and $w\in F_2=F(X,Y)$ is a non-trivial word, then there exists $S\in \mathcal{S}$ and $x,y\in S$ such that $x,y$ generate $S$ and $w(x,y)\neq 1\in S$.
\end{teo}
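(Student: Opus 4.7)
The plan is to deduce Theorem \ref{weigel2} as a direct consequence of Weigel's theorem (Theorem \ref{weigel}), applied with $\mathcal{C}:=\mathcal{S}$. Since $\mathcal{S}$ is an infinite family of pairwise non-isomorphic non-abelian finite simple groups, Weigel's theorem asserts that $F_2$ is residually $\mathcal{S}$: the intersection
\[
\bigcap\,\{N\trianglelefteq F_2 \,:\, F_2/N\in \mathcal{S}\}
\]
is the trivial subgroup.

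Given a non-trivial $w\in F_2$, since $w$ cannot lie in a trivial intersection, there must exist some normal subgroup $N$ in the above family with $w\notin N$. By definition of the family, $F_2/N$ is isomorphic to some $S\in \mathcal{S}$; fixing such an isomorphism $\phi$ and composing with the quotient map, one obtains a surjection $\pi:F_2\to S$ with kernel $N$. Setting $x:=\pi(X)$ and $y:=\pi(Y)$, the chain of equalities $S=\pi(F_2)=\pi(\langle X,Y\rangle)=\langle x,y\rangle$ shows that $x,y$ generate $S$, while $w(x,y)=\pi(w)\neq 1$ in $S$ follows from $w\notin \ker \pi = N$. This is exactly the conclusion of Theorem \ref{weigel2}.

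Since the argument is essentially a definitional unwinding---``$F_2$ is residually $\mathcal{S}$'' means precisely that every non-trivial element survives in some quotient lying in $\mathcal{S}$, and the images of the free generators automatically generate the quotient---there is no real obstacle to overcome at this level. The substantive content has been absorbed into Theorem \ref{weigel}, whose proof (not addressed here) relies on deep structural information about finite simple groups. Hence the only input invoked from the literature is \cite{Wei} itself, and the fact that every non-abelian finite simple group is $2$-generated is not even needed separately: it is built into the statement, because we are working with $F_2$ from the outset.
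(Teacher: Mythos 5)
Your proposal is correct and follows exactly the argument the paper gives after the statement: apply Theorem \ref{weigel} with $\mathcal{C}=\mathcal{S}$ to find $N\trianglelefteq F_2$ with $w\notin N$ and $F_2/N\simeq S\in\mathcal{S}$, then take $x,y$ to be the images of $X,Y$ under the resulting epimorphism. There is nothing to add or correct.
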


Indeed, since $w$ is non-trivial, by Theorem \ref{weigel} there exists $N\trianglelefteq F_2$ such that $w\notin N$ and $F_2/N$ is isomorphic to some $S\in \mathcal{S}$. Let $\varphi :F_2 \to S$ be an epimorphism with ker$(\varphi)=N$. Then $x=\varphi(X)$, $y=\varphi(Y)$ satisfy the required property.

If $P_1,P_2,\ldots,P_k$ are non-trivial polynomials over some coefficient ring, then their product $P$ has the property that every root of some $P_i$ is also a root of $P$. The following is a similar construction for equations over groups. Recall that an equation in $d$ variables is just a word $w\in F_d=F(X_1,X_2,\ldots , X_d)$, and a solution of $w$ on a group $G$ is a $d$-tuple $(x_1,x_2,\ldots,x_d)\in G^d$ such that $w(x_1,x_2,\ldots,x_d)=1\in G$. 

\begin{lema} \label{polinomios}
Let $w_1,w_2,\ldots, w_k \in F_d$ be non-trivial words. Then there exists a non-trivial word $w\in F_d$ with the following property. For every group $G$ and for every $1\le i\le k$, every solution of the equation $w_i$ on $G$, is also a solution of $w$.
\end{lema}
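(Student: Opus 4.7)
The polynomial analogue of taking the product $w_1 w_2 \cdots w_k$ fails in the non-abelian setting, since in a general group $G$ a product of elements need not vanish when a single factor does. My plan is to replace products by commutators, using the identity $[a,b]=aba^{-1}b^{-1}$, which is trivial in any group as soon as either $a$ or $b$ is.

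The case $d=1$ is easy: $F_1\simeq\Z$, and identifying each $w_i$ with a non-zero integer $n_i$, the integer $n=\mathrm{lcm}(n_1,\ldots,n_k)$ works, since $x^{n_i}=1$ implies $x^{n}=1$ in any group.

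For $d\ge 2$ I would proceed by induction on $k$. Set $w^{(1)}=w_1$ and define
\[
w^{(j+1)} \;=\; \bigl[\, w^{(j)},\; u_j\, w_{j+1}\, u_j^{-1}\,\bigr]
\]
for appropriately chosen $u_j\in F_d$, finally putting $w=w^{(k)}$. The universal property follows by a direct inductive verification: if $(x_1,\ldots,x_d)\in G^d$ is a solution of $w_i$ for some $i\le j+1$, then either $i\le j$ and $w^{(j)}$ already vanishes in $G$ by induction, or $i=j+1$ and the conjugate $u_j w_{j+1} u_j^{-1}$ vanishes in $G$; in either case the outer commutator $w^{(j+1)}$ equals $1$ in $G$.

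The main obstacle is choosing the $u_j$ so that each $w^{(j)}$ remains \emph{non-trivial} in $F_d$. Since in a free group $[a,b]\ne 1$ iff $a$ and $b$ do not commute, it suffices to prove the following: given non-trivial $a,c\in F_d$ with $d\ge 2$, there exists $u\in F_d$ such that $uc u^{-1}$ does not commute with $a$. Here I would invoke two standard facts about non-abelian free groups: the centralizer of any non-trivial element is infinite cyclic (say $C(a)=\langle v\rangle$), and every non-trivial normal subgroup of $F_d$ ($d\ge 2$) is non-abelian (its rank is at least $2$ by the Nielsen--Schreier index formula). If every conjugate of $c$ lay in $\langle v\rangle$, the normal closure of $c$ in $F_d$ would be a non-trivial abelian normal subgroup, contradicting the second fact. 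Hence at each step a suitable $u_j$ exists, and the induction carries through to produce the required non-trivial word $w\in F_d$.
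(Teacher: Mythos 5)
Your proof is correct, and it rests on the same basic mechanism as the paper's --- the identity $[a,b]=1$ in $G$ whenever $a=1$ or $b=1$ in $G$ --- but the two arguments diverge in how they guarantee that the resulting word is non-trivial in $F_d$. The paper reduces to $k=2$ and examines the subgroup $F=\langle w_1,w_2\rangle$, which by Nielsen--Schreier is free of rank $1$ or $2$: in the rank-$1$ case $w_1=u^l$, $w_2=u^m$ with $l,m\neq 0$ and the power $w=u^{lm}$ does the job; in the rank-$2$ case $\{w_1,w_2\}$ is a basis of $F$ (free groups are Hopfian), so $w_1$ and $w_2$ do not commute and $w=[w_1,w_2]$ works. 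You instead force non-commutation by conjugating, taking $[w^{(j)},u_jw_{j+1}u_j^{-1}]$, and justify the existence of $u_j$ via two structural facts: centralizers of non-trivial elements of $F_d$ are cyclic, and non-trivial normal subgroups of $F_d$ ($d\ge 2$) are non-abelian. That argument is sound (if every conjugate of $c$ centralized $a$, the normal closure of $c$ would be a non-trivial abelian normal subgroup), though your parenthetical appeal to the Nielsen--Schreier index formula only literally covers normal subgroups of finite index; for infinite index one should instead quote that such subgroups are free of infinite rank, or argue directly. Your route needs the separate (easy) case $d=1$ and heavier input about free groups, but avoids the rank dichotomy at each step; the paper's route is more elementary and uniform in $d$, at the price of the extra ``common root'' case handled by the power $u^{lm}$.
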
 
\begin{proof}
By induction it suffices to prove the result for $k=2$. Let $F$ be the subgroup of $F_d$ generated by $w_1,w_2$. If rk$(F)=1$, $F$ is generated by some non-trivial word $u\in F_d$ and there exist $l,m\in \Z$ with $w_1=u^l$, $w_2=u^m$. By assumption $l,m\neq 0$. Let $w=u^{lm}$. Then $w$ satisfies the required property. If rk$(F)=2$, then $\{w_1,w_2\}$ is a basis of $F$ and, in particular, $w_1$ and $w_2$ do not commute. Take $w=[w_1,w_2]\in F_d$. Then $w$ is non-trivial, and every solution of $w_1$ and of $w_2$ on a group $G$ is a solution of $w$.
\end{proof}

\begin{coro} \label{corosimple}
There are only finitely many finite simple groups which do not admit a Cayley representation. 
\end{coro}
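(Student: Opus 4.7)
The plan is to argue by contradiction, using Theorem \ref{main} to turn the hypothetical failure of a Cayley representation into a group-theoretic identity, and then using Lemma \ref{polinomios} and Theorem \ref{weigel2} to exhibit a contradiction. Suppose the set $\mathcal{S}$ of (isomorphism types of) finite simple groups admitting no Cayley representation is infinite. The abelian finite simple groups are exactly the cyclic groups $\Z_p$ of prime order, and by Example \ref{ciclico} every $\Z_n$ with $n\ge 9$ admits a Cayley representation; hence the abelian contribution to $\mathcal{S}$ is bounded by the finite set $\{\Z_2,\Z_3,\Z_5,\Z_7\}$. Therefore $\mathcal{S}$ would have to contain an infinite family $\mathcal{S}'$ of pairwise non-isomorphic \emph{non-abelian} finite simple groups.

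Now I would exploit the contrapositive of Theorem \ref{main}. For each $G\in\mathcal{S}'$, no generating pair $x,y\in G$ can satisfy $\g(\Gamma(G,\{x,y\}))>21$, so for every generating pair $(x,y)$ there exists a non-trivial word in $F_2$ of length at most $21$ vanishing at $(x,y)$. The set of non-trivial words in $F_2$ of length at most $21$ is finite, say $\{w_1,\dots,w_k\}$. Applying Lemma \ref{polinomios} to this finite list, one obtains a single non-trivial word $w\in F_2$ such that every solution of any $w_i$ in any group is also a solution of $w$. Consequently, for every $G\in\mathcal{S}'$ and every generating pair $(x,y)$ of $G$, one has $w(x,y)=e$.

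This directly contradicts Theorem \ref{weigel2}: applied to the infinite family $\mathcal{S}'$ of non-abelian finite simple groups and to the non-trivial word $w$, it produces some $G\in\mathcal{S}'$ together with $x,y\in G$ generating $G$ and satisfying $w(x,y)\neq e$. The contradiction forces $\mathcal{S}'$, and hence $\mathcal{S}$, to be finite. The only nontrivial step is the reduction via Lemma \ref{polinomios}, which packages the pair-dependent short relation (depending on both $G$ and the chosen generators) into one universal word $w$ to which Weigel's theorem can be applied as a black box; no case-by-case analysis of alternating, Lie type or sporadic groups is required.
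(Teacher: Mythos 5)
Your proposal is correct and follows essentially the same route as the paper: reduce the abelian case to Example \ref{ciclico}, combine all non-trivial words of length at most $21$ into a single word via Lemma \ref{polinomios}, and contradict Theorem \ref{weigel2} using the contrapositive of Theorem \ref{main}. The only difference is cosmetic ordering --- you derive the universal vanishing of $w$ on all generating pairs first and then invoke Weigel, while the paper picks the group from Weigel's theorem first --- and the logic is identical.
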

\begin{proof}
By Example \ref{ciclico} there are only finitely many finite abelian simple groups which do not admit such representation. Suppose there exists an infinite family $\mathcal{S}$ of non-isomorphic non-abelian finite simple groups which do not admit such representation. Let $w_1,w_2,\ldots, w_k$ be all the non-trivial words in $F_2$ of length smaller than or equal to $21$. Let $w\in F_2$ be a non-trivial word as in the statement of Lemma \ref{polinomios}. By Theorem \ref{weigel2} there exists $S\in \mathcal{S}$ and $x,y\in S$ which generate $S$ and such that $w(x,y)\neq 1\in S$. Then $w_i(x,y)\neq 1\in S$ for every $1\le i\le k$. This is a contradiction by Theorem \ref{main}.
\end{proof}


\section{Dehn presentations and random groups}

In this section we will find a large class of examples satisfying the hypothesis of Theorem \ref{main}, and we will see that, in some sense, almost all finitely presented groups admit a Cayley representation.

We assume the reader is familiar with small cancellation theory. Standard reference for this is \cite{LS, Ol}. If a presentation $\mathcal{P}=\langle X | R\rangle$ satisfies the $C'(\frac{1}{6})$ condition, then by Greendlinger's lemma, it is a Dehn presentation (i.e. the Dehn algorithm solves the word problem). In particular, if the length of every relator in $\mathcal{P}$ is greater than or equal to some number $l\ge 1$, then $\g(\Gamma(G_\mathcal{P},X))\ge l$.  

\begin{coro} \label{sixth}
Let $\mathcal{P}=\langle x,y | r_1,r_2,\ldots, r_m \rangle$ be a presentation which satisfies $C'(\frac{1}{6})$ and such that length$(r_j)\ge 22$ for every $1\le j\le m$. Then the presented group $G_{\mathcal{P}}$ admits a Cayley representation.
\end{coro}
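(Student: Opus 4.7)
The plan is to apply Theorem \ref{main} directly to $G=G_{\mathcal{P}}$ with the generating pair $\{x,y\}$ coming from the presentation. The first hypothesis of Theorem \ref{main}, that $G$ be generated by two elements, is immediate from the definition of $\mathcal{P}$, so the entire task reduces to verifying that the Cayley graph $\Gamma(G_{\mathcal{P}},\{x,y\})$ has girth greater than $21$.

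For the girth, I would simply invoke the observation recorded in the paragraph preceding the corollary: for a $C'(\tfrac{1}{6})$ presentation $\langle X \mid R\rangle$ in which every relator has length at least $l$, one has $\g(\Gamma(G_{\mathcal{P}},X))\ge l$. This is where Greendlinger's lemma enters. Concretely, if $w\in F(x,y)$ is a non-trivial freely reduced word representing the identity in $G_{\mathcal{P}}$, then either $w$ is a cyclic conjugate of some $r_j^{\pm 1}$, giving $|w|\ge l$ directly, or $w$ contains two disjoint subwords, each being more than half of some cyclic conjugate of a relator, so $|w|>|r_i|/2+|r_j|/2\ge l$. Taking $l=22$, one obtains $\g(\Gamma(G_{\mathcal{P}},\{x,y\}))\ge 22>21$.

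Both hypotheses of Theorem \ref{main} are thus satisfied, and the theorem produces the desired Cayley representation of $G_{\mathcal{P}}$. There is no serious obstacle in the argument: the substance of the corollary is entirely packaged inside Theorem \ref{main} and the classical small cancellation machinery reviewed in the paragraph above, so the proof is just the two-line assembly \emph{($C'(\tfrac{1}{6})$ + relator length $\ge 22$) $\Rightarrow$ girth $>21$ $\Rightarrow$ Cayley representation}.
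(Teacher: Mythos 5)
Your proof is correct and takes essentially the same route as the paper, which states the corollary without a separate proof and relies precisely on the preceding observation that a $C'(\frac{1}{6})$ presentation is Dehn by Greendlinger's lemma, so relator length $\ge 22$ forces $\g(\Gamma(G_{\mathcal{P}},\{x,y\}))\ge 22>21$ and Theorem \ref{main} applies. Your explicit two-disjoint-subwords argument correctly fills in the only detail the paper leaves implicit, namely why the girth is at least $l$ rather than merely greater than $l/2$.
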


For 1-relator groups with torsion there is a stronger result than Corollary \ref{sixth} in virtue of the Newman Spelling Theorem, which implies that when the relator is a proper power, the presentation is Dehn.

\begin{coro}
Let $\mathcal{P}=\langle x,y| r^m\rangle$ be a presentation, were $r\in F_2$ is cyclically reduced and $m\ge 2$. If the length of $r^m$ is greater than or equal to $22$, $G_\mathcal{P}$ admits a Cayley representation.
\end{coro}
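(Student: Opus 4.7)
The plan is to reduce the statement to Theorem \ref{main} by controlling the girth of the Cayley graph $\Gamma = \Gamma(G_\mathcal{P}, \{x, y\})$. The main tool will be the Newman Spelling Theorem, which the author has explicitly invoked in the paragraph preceding this corollary.

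First I would apply Newman to $\mathcal{P} = \langle x, y \mid r^m \rangle$: since $r$ is cyclically reduced and $m \geq 2$, Newman guarantees that $\mathcal{P}$ is a Dehn presentation. Next I would observe that the unique relator $r^m$ has length $|r^m| \geq 22$ (from the hypothesis) and is itself cyclically reduced, because the first and last letters of $r^m$ coincide with those of $r$, so cyclic reducedness of $r$ passes to $r^m$. Invoking the general fact recorded in the paragraph before Corollary \ref{sixth} (a Dehn presentation whose relators all have length at least $l$ yields $\g(\Gamma) \geq l$) with $l = 22$, I obtain $\g(\Gamma) \geq 22 > 21$, and Theorem \ref{main} then delivers a Cayley representation of $G_\mathcal{P}$.

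The argument is essentially parallel to the proof of Corollary \ref{sixth}, with Newman's theorem replacing Greendlinger's lemma as the source of the Dehn property. I do not expect a genuine obstacle here; the only point requiring any care is checking that $r^m$ is cyclically reduced so that the length hypothesis applies to the cyclically reduced form of the relator, and this is immediate from the cyclic reducedness of $r$.
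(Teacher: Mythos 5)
Your proposal is correct and coincides with the paper's own (implicit) argument: the corollary is stated without a separate proof precisely because the preceding sentence already records that the Newman Spelling Theorem makes $\mathcal{P}$ a Dehn presentation, whence the girth bound $\g(\Gamma(G_\mathcal{P},\{x,y\}))\ge 22>21$ from the paragraph before Corollary \ref{sixth}, and Theorem \ref{main} applies. Your extra check that $r^m$ is cyclically reduced (so that the length hypothesis really applies to the relator as used) is a sensible precaution and is immediate from the cyclic reducedness of $r$.
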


When studying random groups, there are two models which have received more attention than any other: Gromov's density model introduced in \cite[\S 9]{Gro} and Arzhantseva and Ol'shanskii's few relators model \cite{AO}. We recall basic definitions. Standard reference on this is \cite{Ghy, KS, Oll}. In the density model we fix a number $n\ge 2$ of generators $x_1,x_2,\ldots, x_n$ and a \textit{density parameter} $0\le d\le 1$. The number of cyclically reduced words of length $l$ is asymptotically $(2n-1)^l$. Given $l\ge 1$, we choose $(2n-1)^{dl}$ cyclically reduced words of length $l$ uniformly randomly and independently to get a presentation $\mathcal{P}=\langle x_1,x_2,\ldots, x_n | R \rangle$ whose relators are those words. We say that a random presentation (or a random group) satisfies a certain property, if the probability that $\mathcal{P}$ (or $G_\mathcal{P}$) satisfies the property tends to $1$ as $l\to \infty$. We have the following result by Gromov and Ollivier \cite{Oll}.

\begin{teo} (Gromov, Ollivier)
In the density model, if $d<\frac{1}{2}$, a random group is infinite hyperbolic, while for $d>\frac{1}{2}$, a random group is $\Z$ or $\Z_2$.
\end{teo}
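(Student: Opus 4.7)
The plan is to handle the two density regimes by different probabilistic techniques, both resting on the same counting philosophy: the $m := (2n-1)^{dl}$ randomly chosen relators are weighed against the number of possible ``configurations'' (van Kampen diagrams, respectively overlapping relator pairs) that could cause trouble, and one shows that one side dominates the other depending on whether $d<\tfrac{1}{2}$ or $d>\tfrac{1}{2}$.

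For the subcritical regime $d<\tfrac{1}{2}$, the objective is a linear isoperimetric inequality for reduced van Kampen diagrams over $\mathcal{P}$, which by Gromov's criterion implies hyperbolicity of $G_{\mathcal{P}}$. First I would classify reduced diagrams with $N$ faces by their abstract planar-graph type: each face has perimeter $l$, and in a diagram with boundary $\leq \varepsilon N l$ there are roughly $Nl/2$ internal edge identifications, so the number of abstract diagram types grows like $(2n-1)^{Nl/2+o(l)}$. Each face is independently labeled by one of the $m$ relators out of $(2n-1)^{l}$ cyclically reduced words of length $l$, which gives expected number $(2n-1)^{Nl(d-\tfrac{1}{2})+o(l)}$ of non-filling diagrams; this vanishes as $l\to\infty$ exactly when $d<\tfrac{1}{2}$. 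A Markov-inequality argument then shows the isoperimetric bound holds with probability tending to $1$, and a separate calculation on the abelianization (which has rank $n$ minus the rank of a random integer matrix, and is almost surely finite but non-trivial when $d<\tfrac{1}{2}$) rules out the collapse, yielding infiniteness.

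For the supercritical regime $d>\tfrac{1}{2}$, I would use a birthday-paradox argument. Partition the $m$ relators by their first letter, so roughly $m/(2n)$ begin with any fixed generator $a$. Each carries a suffix of length $l-1$, and a standard second-moment computation shows that once $m/(2n) \gg (2n-1)^{(l-1)/2}$, which is precisely $d>\tfrac{1}{2}$, with probability tending to $1$ there exist two relators of the forms $au$ and $bu$ for distinct letters $a,b$. This forces $ab^{-1}=e$ in $G_{\mathcal{P}}$, and iterating over letter pairs collapses all generators to a single equivalence class. A parity check on total exponents then pins the quotient down as either the trivial group or $\Z_{2}$.

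The main obstacle is the subcritical side. Naive $C'(\tfrac{1}{6})$ small cancellation only certifies hyperbolicity up to $d<\tfrac{1}{12}$, so one genuinely needs Gromov's local--global principle (in the precise form refined by Ollivier) to extract global hyperbolicity from bounds on diagrams of bounded size. The delicate part is controlling the interaction between van Kampen reduction and the random labeling: one must show that the reduction process does not inflate the count of abstract diagrams, so that the $(2n-1)^{Nl/2}$ bound remains valid after reduction. By contrast, the supercritical birthday-paradox argument is essentially elementary once the correct exponent $\tfrac{1}{2}$ is identified, and the parity issue governing the $\Z_{2}$ versus trivial dichotomy is a routine afterthought.
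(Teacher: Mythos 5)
First, a point of comparison: the paper does not prove this statement at all --- it is quoted as a known theorem of Gromov and Ollivier, with the proof living in the cited reference (Ollivier, \emph{Sharp phase transition theorems for hyperbolicity of random groups}). So there is no internal proof to measure your sketch against; it has to be judged against the standard argument, which it broadly follows: diagram counting plus the Gromov--Ollivier local--global principle for $d<\frac{1}{2}$, and a birthday-paradox collapse for $d>\frac{1}{2}$. The supercritical half is fine in outline (the second-moment count of relator pairs sharing a long suffix, the identification of all generators, the parity dichotomy), and your conclusion ``trivial or $\Z_2$'' is in fact the correct statement of the theorem; the ``$\Z$'' in the paper's formulation should be read as the trivial group.

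The genuine gap is your infiniteness argument in the subcritical regime. You propose to deduce infiniteness from the abelianization being ``almost surely finite but non-trivial.'' This fails twice over. A finite non-trivial abelianization only certifies that $G_{\mathcal{P}}$ is non-trivial, never that it is infinite; and in any case the abelianization at positive density is the quotient of $\Z^n$ by $(2n-1)^{dl}$ essentially random vectors, which with probability tending to $1$ is trivial (for $l$ odd) or $\Z_2$ (for $l$ even) --- it has rank $0$ and carries no information about infiniteness. The standard route is different: the same diagram estimates that yield the isoperimetric inequality show that the presentation complex is aspherical, hence (the relators not being proper powers, generically) $G_{\mathcal{P}}$ is torsion-free; non-triviality follows from the girth/injectivity-radius bound of order $(1-2d)l$ implied by the isoperimetric inequality; and a non-trivial torsion-free group is infinite. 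Without some such step your subcritical argument only yields ``hyperbolic,'' not ``infinite hyperbolic.'' The rest of that half is the right heuristic, but, as you acknowledge yourself, the control of reduced diagrams and the local--global principle are where essentially all of the work lies, and they are taken on faith here.
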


Although hyperbolic groups admit a Dehn presentation, this new presentation could have relators of different lengths. However the following result holds.

\begin{teo} (Gromov \cite{Gro})
In the density model, if $d<\frac{1}{12}$, then a random presentation satisfies $C'(\frac{1}{6})$, so the Dehn algorithm solves the word problem. If $d>\frac{1}{12}$, a random presentation does not satisfy $C'(\frac{1}{6})$.
\end{teo}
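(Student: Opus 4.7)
The plan is to proceed by a first- and second-moment argument applied to the number of long overlaps between pairs of chosen relators. Since all sampled relators have the same length $l$, the condition $C'(1/6)$ is equivalent to requiring that no two cyclic conjugates of relators or their inverses share a common initial subword of length $k := \lceil l/6 \rceil$, so it is enough to estimate the probability that such an overlap occurs for the random presentation $\mathcal{P}$.

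For the direction $d < \frac{1}{12}$, I would use a union bound. With $N = (2n-1)^{dl}$ relators, there are at most $N^2$ ordered pairs, and $O(l^2)$ choices of starting positions in each pair (times a bounded factor accounting for cyclic permutation and inversion). For each such configuration, the probability that the specified length-$k$ subwords coincide is $(1+o(1))(2n-1)^{-k}$, since cyclically reduced words of length $l$ are asymptotically uniformly distributed among $(2n-1)^l(1+o(1))$ such words (the non-independence between consecutive letters contributes only a multiplicative correction). Therefore the expected number of configurations violating $C'(1/6)$ is of order
\[ l^2 \cdot (2n-1)^{l(2d - 1/6)}, \]
which tends to $0$ when $d < \frac{1}{12}$. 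By Markov's inequality, the probability that $\mathcal{P}$ fails $C'(1/6)$ tends to $0$, and the Dehn algorithm conclusion follows from Greendlinger's lemma as discussed just before the previous corollary.

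For the converse $d > \frac{1}{12}$, the same first-moment computation shows that the expected number of length-$k$ coincidences tends to infinity. To upgrade this to existence with high probability, I would carry out a second-moment (Paley--Zygmund) argument: compute the variance of the count of such coincidences and show it is of lower order than the square of the mean. The dominant contribution to the second moment comes from pairs of configurations that involve four distinct relators, which are independent and hence contribute exactly the square of the mean; the remaining contributions, which share one or two relators, are of lower order and can be bounded by a more careful overlap count.

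The routine direction is the first; the main obstacle is the converse $d > \frac{1}{12}$, where the expectation-only bound is insufficient and one must control the dependencies among the overlap events via a variance estimate. A secondary, mostly cosmetic, difficulty is handling cyclically reduced words (rather than genuinely independent letters) and the combinatorial bookkeeping of cyclic permutations and inversions, but these only contribute $1+o(1)$ factors that do not shift the threshold from $d = \frac{1}{12}$.
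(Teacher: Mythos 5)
The paper gives no proof of this statement: it is quoted as a known theorem of Gromov (detailed proofs appear in Ollivier's work on random groups), so there is no internal argument to compare yours against. Your union-bound/first-moment computation for $d<\frac{1}{12}$ and second-moment (Paley--Zygmund) argument for $d>\frac{1}{12}$ is precisely the standard proof of this phase transition, with the threshold $2d=\frac{1}{6}$ coming out correctly; the only points requiring care are the ones you already flag --- pieces arising from cyclic conjugates, inverses, and self-overlaps of a relator with its own conjugates, plus the $1+o(1)$ corrections from conditioning on cyclic reducedness --- none of which shifts the threshold.
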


The fact that a presentation does not satisfy $C'(\frac{1}{6})$ does not imply it is not a Dehn presentation. Moreover, Ollivier proved in \cite{Oll2} the following result.

\begin{teo} (Ollivier)
In the density model, if $d<\frac{1}{5}$, a random presentation is Dehn's. If $d>\frac{1}{5}$, then it is not.
\end{teo}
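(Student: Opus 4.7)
The plan is to reproduce Ollivier's argument from \cite{Oll2}, since the statement splits into two claims that require opposite techniques. Recall that $\mathcal{P} = \langle X \mid R \rangle$ is a Dehn presentation if and only if every non-trivial reduced word $w$ with $w =_{G_\mathcal{P}} 1$ contains a subword strictly longer than half of some cyclic conjugate of a relator or its inverse. Since the $C'(\frac{1}{6})$ condition is lost once $d > \frac{1}{12}$, one cannot appeal to Greendlinger's lemma and must analyse reduced van Kampen diagrams directly.

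For the direction $d < \frac{1}{5}$ I would run a first-moment argument. A putative witness to failure of the Dehn property is a short reduced word $w$ together with a reduced van Kampen diagram $D$ over $\mathcal{P}$ with $\partial D$ labelled by $w$ such that no single 2-cell of $D$ shares with $\partial D$ more than half of its boundary. Enumerating the combinatorial types of $D$ (a finite set once one bounds the number of 2-cells), the expected number of labellings of those 2-cells by elements of $R$ compatible with a fixed $w$ is bounded by $(2n-1)^{l\, \Phi(d, D)}$, where $\Phi$ quantifies the excess of ``free letters'' over ``matching constraints'' along the interior edges. The goal is to show $\sup_D \Phi(d, D) < 0$ exactly when $d < \frac{1}{5}$; a Borel--Cantelli argument then yields that asymptotically almost surely no such $D$ exists, so $\mathcal{P}$ is Dehn.

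For the direction $d > \frac{1}{5}$ I would proceed by explicit construction. One expects to find, with probability bounded away from $0$, a small configuration of a few relators whose overlaps assemble into a van Kampen diagram of boundary length less than $l/2$, producing an explicit word that is trivial in $G_\mathcal{P}$ yet contains no subword covering more than half of any relator. The average number of such configurations scales like $(2n-1)^{l\, \Psi(d)}$ with $\Psi(d) > 0$ precisely when $d > \frac{1}{5}$, and a standard second-moment computation converts positive expectation into positive probability.

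The main obstacle in both halves is pinning down the sharp constant $\frac{1}{5}$. A naive count based on pairwise overlaps between two relators yields only a weaker threshold, and the extremal configuration turns out to involve three relators meeting along carefully balanced pairwise overlaps of length close to a quarter of $l$. Carrying out the constrained optimization over all combinatorial shapes of $D$, while maintaining the uniform control in $l$ required for the first-moment bound, is the delicate combinatorial step where I would expect to spend the bulk of the effort.
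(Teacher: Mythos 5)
This is a statement the paper does not prove: it is Ollivier's theorem, quoted verbatim with a citation to \cite{Oll2}, and it is used as a black box to deduce Corollary \ref{corodensity}. So the relevant question is whether your sketch would actually reconstruct Ollivier's argument, and it would not: it is a plan whose decisive step is deferred. You say the goal is to show $\sup_D \Phi(d,D)<0$ exactly when $d<\frac{1}{5}$ and that ``carrying out the constrained optimization \ldots is the delicate combinatorial step where I would expect to spend the bulk of the effort.'' That optimization \emph{is} the theorem; without it neither direction is established.

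Worse, the heuristic you do supply points at the wrong constant. If failure of the Dehn property is modelled as ``no $2$-cell of $D$ shares with $\partial D$ more than half of its boundary,'' then each face has internal length at least $\frac{l}{2}$, the total internal length is at least $\frac{l|D|}{4}$ (each internal edge is shared by two faces), and the first-moment exponent $dl|D|-I$ is negative for all such $D$ precisely when $d<\frac{1}{4}$, not $\frac{1}{5}$. Your proposed extremal configuration --- three relators with pairwise overlaps of length about $\frac{l}{4}$ --- gives exactly this $\frac{1}{4}$ threshold. The actual source of $\frac{1}{5}$ is that the Dehn condition concerns \emph{contiguous} subwords of the reduced boundary word: a face can have well over half of its total boundary length on $\partial D$ while meeting $\partial D$ in several arcs, none longer than half a relator. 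The critical configurations in \cite{Oll2} exploit precisely such multi-arc contacts, and your model excludes them by fiat, so the union bound you describe would prove a weaker statement (essentially the $C'(\frac{1}{4})$-type threshold) and would not be sharp at $\frac{1}{5}$. Finally, for $d>\frac{1}{5}$ a second-moment bound only yields the bad configuration with probability bounded away from $0$, whereas the theorem (in the density model's convention, used throughout this paper) asserts failure with probability tending to $1$; you would also need to check that the exhibited word admits no Dehn reduction coming from relators \emph{other} than those in your small diagram, which is an additional union bound your sketch omits.
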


This, together with Theorem \ref{main} imply the following

\begin{coro} \label{corodensity}
In the density model, if $d<\frac{1}{5}$, a random group with $n=2$ generators admits a Cayley representation.
\end{coro}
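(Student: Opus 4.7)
The plan is to combine Ollivier's theorem on Dehn random presentations with Theorem \ref{main}. Fix density $d<\frac{1}{5}$ and the two generators $x,y$, and consider a random presentation $\mathcal{P}=\langle x,y \mid R \rangle$ with relators of length $l$. By Ollivier's theorem just cited, the event ``$\mathcal{P}$ is a Dehn presentation'' has probability tending to $1$ as $l\to\infty$. The strategy is to show that this event forces the girth of the Cayley graph $\Gamma=\Gamma(G_\mathcal{P},\{x,y\})$ to exceed $21$ as soon as $l$ is large enough, and then invoke Theorem \ref{main}.

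To extract the girth bound, I would argue directly from the definition: if $\mathcal{P}$ is Dehn, then every non-trivial reduced word $w\in F_2$ which represents the identity in $G_\mathcal{P}$ must contain a subword $u$ of some cyclic conjugate of a relator $r\in R\cup R^{-1}$, with $|u|>|r|/2$. Since every relator in $R$ has length exactly $l$, this gives $|w|\geq |u|>l/2$, hence $\g(\Gamma)>l/2$. (A slightly sharper bound, $\g(\Gamma)=l$, is available when one already knows that $\mathcal{P}$ satisfies $C'(\frac{1}{6})$, but the weaker $l/2$ is all that is needed here.)

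For any $l>42$, the inclusion of events ``$\mathcal{P}$ is Dehn'' $\subseteq$ ``$\g(\Gamma)>21$'' therefore holds, and by Theorem \ref{main} the latter event is contained in ``$G_\mathcal{P}$ admits a Cayley representation.'' Since the probability of the first event tends to $1$ by Ollivier, so does the probability of the last, which is precisely the statement of the corollary. The argument is essentially a routine chaining of the two results already cited; the only step requiring a brief verification is the passage from the Dehn property to the explicit girth lower bound $l/2$, and this is immediate from the definition of the Dehn algorithm, so there is no substantive obstacle.
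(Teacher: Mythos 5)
Your proposal is correct and follows essentially the same route as the paper, which derives the corollary by chaining Ollivier's theorem (a random presentation at density $d<\frac{1}{5}$ is Dehn) with Theorem \ref{main} via a girth lower bound on $\Gamma(G_\mathcal{P},\{x,y\})$. Your explicit bound $\g(\Gamma)>l/2$, immediate from the definition of a Dehn presentation, is slightly weaker than the bound $\g(\Gamma)\ge l$ the paper invokes, but it suffices since $l\to\infty$.
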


Similar results for every fixed number $n\ge 3$ of generators should be true, and a generalization of Theorem \ref{main} could be used in the proof.

We turn now to the few relators model by Arzhantseva and Ol'shanskii. In this model both the number $n$ of generators and number $m$ of relators are fixed. For each $l\ge 1$, $m$ cyclically reduced words of length at most $l$ are chosen at random independently and uniformly to form the set $R$ of relators. We say that a random $n$-generator, $m$-relator presentation (or group) satisfies certain property if it does with probability $\to 1$ when $l\to \infty$.       

\begin{teo} (Arzhantseva, Ol'shanskii \cite{AO}) \label{AOteo}
For every $n\ge 2$, $m\ge 1$, $\lambda>0$, a random $n$-generator $m$-relator presentation satisfies $C'(\lambda)$.
\end{teo}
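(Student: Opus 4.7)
The plan is a first-moment (union bound) argument in two stages. Fix $n, m, \lambda$ and choose an auxiliary $\varepsilon > 0$ small in terms of $\lambda$ (say $\varepsilon < 1/2$). First I would show that with probability tending to $1$ as $l\to\infty$, every relator $r_i$ has length at least $L := \lceil (1-\varepsilon)l\rceil$: the number of cyclically reduced words of length exactly $k$ over the $2n$-letter alphabet is $\Theta((2n-1)^k)$, so the uniform distribution on the set of cyclically reduced words of length at most $l$ concentrates exponentially near $l$, giving $\mathbb{P}[|r_i| < (1-\varepsilon)l] = O((2n-1)^{-\varepsilon l})$, and a union bound over the $m$ relators handles this step.

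Conditioning on $|r_i|\ge L$ for every $i$, a violation of $C'(\lambda)$ requires a common piece of length at least $k := \lceil \lambda L\rceil$ appearing at two distinct starting positions of the symmetrized collection $R^{\pm}$ consisting of all cyclic rotations of the $r_i^{\pm 1}$. There are at most $(2ml)^2$ ordered pairs of such positions. For a pair of positions in \emph{different} relators, one conditions on the first relator to fix a candidate piece $u$ of length $k$; then the probability that the second (independent, uniform) cyclically reduced word displays $u$ at a prescribed position is $O((2n-1)^{-k})$, since conditionally on a single letter each subsequent letter of such a word is essentially uniform over the $2n-1$ non-cancelling alternatives. For two positions in the \emph{same} relator, the analogous exponential bound holds: when the two length-$k$ windows are cyclically disjoint the argument is the same, and when they overlap, the coincidence forces $r_i$ to admit a non-trivial period strictly less than $k$, which restricts $r_i$ to lie in a set of size at most $O((2n-1)^{L-k})$ within the $\Theta((2n-1)^{L})$ admissible words, again yielding the bound $O((2n-1)^{-k})$.

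Combining the two stages, the probability that the random presentation fails $C'(\lambda)$ is at most $O\bigl(m^2 l^2 (2n-1)^{-\lambda(1-\varepsilon)l}\bigr)$, which tends to $0$ as $l\to\infty$, proving the theorem. The main technical obstacle is the careful probabilistic bookkeeping for uniform cyclically reduced words with prescribed subwords, especially in the overlapping-window case where the two sampled windows are highly correlated. The cleanest implementation uses the transfer matrix for the Markov chain that encodes cyclic reducedness (whose leading eigenvalue is $2n-1$, with a spectral gap) to obtain the single-position bound $O((2n-1)^{-k})$ with a universal constant; alternatively one can extract a period from any overlapping coincidence and count periodic reduced words directly. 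All other ingredients are standard counting for free groups.
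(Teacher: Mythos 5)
The paper does not prove this statement: it is cited from Arzhantseva--Ol'shanskii \cite{AO} and used as a black box in Corollary \ref{corofew}, so there is no internal proof to compare yours against. Your first-moment argument is the standard route to this genericity statement and is essentially sound: the concentration of the uniform measure on cyclically reduced words of length $\le l$ near length $l$, the union bound over the $O((ml)^2)$ ordered pairs of starting positions in the symmetrized set, and the count of $O((2n-1)^{N-k})$ cyclically reduced words of length $N$ carrying a prescribed reduced subword of length $k$ at a prescribed position together give a failure probability $O\bigl(m^2l^2(2n-1)^{-\lambda(1-\varepsilon)l}\bigr)\to 0$.

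One case in your same-relator analysis is not covered by the mechanism you invoke. The symmetrized set contains the inverses $r_i^{-1}$, so a piece may arise as a common prefix of a rotation of $r_i$ and a rotation of $r_i^{-1}$; when the two length-$k$ windows overlap as subsets of positions of the cyclic word $r_i$, the coincidence does not force a period of $r_i$ but rather the involutive constraint that the letter at position $p+j$ equals the inverse of the letter at position $q-j$ for $0\le j<k$. This pairs the positions of the union of the two windows into orbits of size at most $2$ (an orbit of size $1$ would force a letter to equal its own inverse, killing the count), so at least $k/2$ of the letters in the union are determined by the others and the number of admissible words of length $N$ is $O((2n-1)^{N-k/2})$. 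The exponent in the final bound degrades to $\lambda(1-\varepsilon)l/2$ for these pairs, which still tends to $0$, so the theorem survives; but as written the ``extract a period'' step does not apply to this sub-case and should be replaced by the pairing count just described. The remaining loose ends (uniformity over the relator length $N\in[(1-\varepsilon)l,\,l]$ rather than $N=L$ exactly, and the possibility that two distinct starting positions yield the same element of the symmetrized set) only over-count and do not affect the upper bound.
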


\begin{coro} \label{corofew}
For every $m\ge 1$, a random $2$-generator $m$-relator group admits a Cayley representation. 
\end{coro}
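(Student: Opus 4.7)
The plan is to combine Corollary \ref{sixth} with Theorem \ref{AOteo}. Corollary \ref{sixth} tells us that it suffices to exhibit, with probability tending to $1$, a presentation of the random group which is $C'(\frac{1}{6})$ and all of whose relators have length at least $22$. The first property is handed to us directly by Theorem \ref{AOteo} applied to $n=2$ and $\lambda = \frac{1}{6}$: a random $2$-generator $m$-relator presentation satisfies $C'(\frac{1}{6})$ with probability $\to 1$ as $l\to \infty$.

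The only additional thing to verify is the length condition. Here I would argue by a straightforward counting/union bound. The number of cyclically reduced words of length exactly $k$ in $F_2$ grows like $3^k$, so the total count of cyclically reduced words of length at most $l$ is of order $3^l$. On the other hand the set of cyclically reduced words of length strictly less than $22$ has a fixed finite cardinality independent of $l$. Hence the probability that a single uniformly chosen relator has length at least $22$ is $1 - O(3^{-l})$. Since $m$ is fixed, a union bound over the $m$ independently chosen relators yields probability $1 - O(m\cdot 3^{-l}) \to 1$ that every relator has length at least $22$.

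Intersecting the two events (both of which hold with probability $\to 1$) and applying Corollary \ref{sixth} finishes the argument. If one prefers to avoid invoking Corollary \ref{sixth} as a black box, the same proof just applies Greendlinger's lemma directly to conclude $\g(\Gamma(G_{\mathcal{P}},\{x,y\})) \ge 22 > 21$ and then invokes Theorem \ref{main}.

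There is no serious obstacle. The only potentially subtle point is to confirm that the conventions of the few relators model (cyclically reduced words, uniform distribution over words of length at most $l$) are the ones used in Theorem \ref{AOteo}, so that the two probability-$\to 1$ statements can legitimately be intersected; both are stated with respect to the same model earlier in the paper, so this is already in place.
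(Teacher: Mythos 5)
Your proposal is correct and follows essentially the same route as the paper: Theorem \ref{AOteo} with $n=2$, $\lambda=\frac{1}{6}$, plus a counting argument showing that with probability $\to 1$ all relators have length at least $22$, then Corollary \ref{sixth}. The paper phrases the counting step by bounding the number of $m$-tuples containing a short relator against the total number of $m$-tuples (using explicit upper and lower bounds on the number of cyclically reduced words of length $\le l$), which is the same estimate as your per-relator union bound.
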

\begin{proof}
This follows from Corollary \ref{sixth}, by applying Theorem \ref{AOteo} for $n=2, \lambda=\frac{1}{6}$ and this assertion: in a random $2$-generator $m$-relator presentation all the relators have length at least $22$. This follows from a simple counting argument (cf. \cite[p. 3208]{Arz}): the number $c_l$ of cyclically reduced words in $X,Y$ of length $l$ is smaller than or equal to $4.3^{l-1}$. Thus, the number $c_{\le l}$ of cyclically reduced words of length $\le l$ is at most $2.(3^l-1)<2.3^l$. The number of $m$-tuples of cyclically reduced words of length at most $l$ which contain at least one word of length $\le 21$ is at most $m.2.3^{21}(2.3^{l})^{m-1}=m2^m3^{21}3^{lm-l}$. On the other hand $c_{\le l}\ge c_l\ge 4.3^{l-2}.2=8.3^{l-2}$ for each $l\ge 2$. Therefore, the number of $m$-tuples of cyclically reduced words of length $\le l$ is at least $(8.3^{l-2})^m=8^m3^{lm-2m}$. Since $m2^m3^{21}3^{lm-l}/8^m3^{lm-2m}\to 0$ as $l\to \infty$, the assertion is proved.  
\end{proof}

\section{Extensions of the integers by semi-regularly representented groups} \label{sectionregular}

Recall that a group $G$ is said to be indicable if there exists an epimorphism $G\to \Z$. In other words, they are the extensions of $\Z$ or, equivalently, the semidirect products $N\rtimes_\psi \Z$.

\begin{teo} \label{producto1}
Let $G\neq \Z_2$ be a group which admits a Cayley representation, and let $\psi:\Z\to \aut (G)$ be a group homomorphism. Then $G\rtimes_\psi \Z$ admits a regular poset representation. 
\end{teo}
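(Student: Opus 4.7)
The plan is to stack $\Z$-many twisted copies of the Cayley poset $P(G,S)$. Write $\psi_k=\psi(k)\in\aut(G)$ and define a poset $Q$ with underlying set $G\times\Z$ whose cover relations are $(g,n)\lessdot(h,n+1)$ exactly when $g^{-1}h\in\psi_n(S)$; take $\le_Q$ to be the reflexive, transitive closure. Antisymmetry is automatic since every non-trivial chain strictly raises the $\Z$-coordinate. The formula $(a,k)\cdot(g,n):=(a\psi_k(g),k+n)$ defines a free, transitive action of $G\rtimes_\psi\Z$ on $G\times\Z$, and it preserves the order since $\psi_k(g^{-1}h)\in\psi_k(\psi_n(S))=\psi_{k+n}(S)$ if and only if $g^{-1}h\in\psi_n(S)$. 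So $G\rtimes_\psi\Z$ embeds into $\aut(Q)$ via the left regular action, and it suffices to prove the reverse inclusion.

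The core observation is that the levels $Q_n:=G\times\{n\}$ are detectable from the order alone. Say $x,y\in Q$ share a \emph{common upper cover} if there exists $z$ with $x\lessdot z$ and $y\lessdot z$. Because cover relations strictly raise the $\Z$-coordinate, two such elements must lie in the same level; conversely, $(g,n),(h,n)\in Q_n$ share a common upper cover iff $g^{-1}h\in\psi_n(SS^{-1})$. Apart from the trivial case $G=1$, the hypothesis $G\ne\Z_2$ together with Remark~\ref{generan} imply that $P(G,S)$ is connected, so $SS^{-1}$, and hence every $\psi_n(SS^{-1})$, generates $G$. The transitive closure of the common-upper-cover relation on each $Q_n$ is therefore all of $Q_n$, and the levels are precisely the equivalence classes of this intrinsic relation. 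Any $\varphi\in\aut(Q)$ thus permutes levels, and the induced permutation of $\{Q_n\}_{n\in\Z}$ respects the adjacency ``$Q_n$ is adjacent to $Q_{n\pm 1}$''. The level graph is a bi-infinite path, and because $\varphi$ is order-preserving the induced map respects the upward direction; it must therefore be a translation $Q_n\mapsto Q_{n+k}$ for some fixed $k\in\Z$.

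Composing $\varphi$ with $L_{(e,-k)}$ and then with $L_{(a^{-1},0)}$, where $(a,0)$ is the image of $(e,0)$ under the previous composition, I may assume $\varphi$ preserves every level and fixes $(e,0)$. The full subposet induced by $Q$ on $Q_n\cup Q_{n+1}$ is, via $(g,n)\leftrightarrow g\in G$ and $(h,n+1)\leftrightarrow h'\in G'$, exactly the Cayley poset $P(G,\psi_n(S))$, which by Proposition~\ref{pnueva}(ii) is again a Cayley representation of $G$. Hence $\varphi|_{Q_0\cup Q_1}$ is an automorphism of $P(G,S)$ fixing its minimal element $(e,0)$, and the Cayley property forces it to be the identity; in particular $\varphi(e,1)=(e,1)$, and the same argument applied successively to $Q_1\cup Q_2$, $Q_{-1}\cup Q_0$, and so on yields $\varphi=\mathrm{id}_Q$ by induction. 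Therefore $\varphi$ was a left multiplication, and $\aut(Q)\simeq G\rtimes_\psi\Z$ acts regularly on $Q$.

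The main obstacle is recovering the levels $Q_n$ intrinsically from the order; without it, an automorphism could in principle reshuffle elements between levels in ways not controlled by the Cayley property on a single slab. This is exactly where the hypothesis $G\ne\Z_2$ is essential, through the connectedness of $P(G,S)$ furnished by Remark~\ref{generan}.
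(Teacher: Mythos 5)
Your proof is correct and follows essentially the same route as the paper: stack $\Z$-many $\psi$-twisted copies of $P(G,S)$, use connectivity of $P(G,S)$ (Remark \ref{generan}) to show that any automorphism preserves the slabs $Q_n\cup Q_{n+1}$, invoke the Cayley property on one slab, and induct up and down. The only cosmetic differences are that you twist the cover relations level by level instead of twisting the gluing maps, and you recognize the levels globally via the common-upper-cover equivalence relation rather than by iterated cover-closures from the fixed basepoint.
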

\begin{proof}
Let $P(G,S)$ be a Cayley representation of $G$. Define $P$ to be the poset obtained from countable many copies $P(G,S)\times \{n\}$ ($n\in \Z$) of $P(G,S)$ with the identifications $(g',n)\sim (\psi(1)(g),n+1)$ for every $g\in G, n\in \Z$. That is, the order on $P$ is the transitive closure of the union of the orders in each copy. 
Let $H=(G^{op}\rtimes_\psi \Z)^{op}$. In other words, the underlying set of $H$ is the cartesian product $G\times \Z$ and the operation is defined by $(g_1,n_1)(g_2,n_2)=(\psi(n_2)(g_1)g_2, n_1+n_2)$. The groups $G\rtimes_\psi \Z$ and $H$ are isomorphic via the map $(g,n)\mapsto (\psi(-n)(g),-n)$. Note that $H$ has a well-defined left action on $P$ given by $(g_1,n_1)\cdot (g_2,n_2)= (\psi(n_2)(g_1)g_2, n_1+n_2)$ and $(g_1,n_1)\cdot (g_2',n_2)= ((\psi(n_2)(g_1)g_2)', n_1+n_2)$. The action is clearly transitive and free. In order to show that $H\to \aut(P)$ is surjective it suffices to prove that any automorphism of $P$ fixing $(e,0)$ is the identity.

Suppose $\varphi \in \aut(P)$ fixes $(e,0)$. Since $P(G,S)$ is connected by Remark \ref{generan}, $\varphi$ restricts to an automorphism of $P(G,S)\times \{0\}$. Concretely, the set $D_1$ of points covering $(e,0)$ is $\varphi$-invariant, the set $D_2$ of points covered by those in $D_1$ is then also is $\varphi$-invariant, then the set $D_3$ of points covering those in $D_2$, and so on. Thus $P(G,S)\times \{0\} \subseteq P$ is $\varphi$-invariant. Since it is also $\varphi^{-1}$-invariant, $\varphi(P(G,S)\times \{0\})=P(G,S)\times \{0\}$. 

By hypothesis $\varphi|_{P(G,S)\times \{0\}}$ is the identity. In particular $(e,1)=(e',0)$ is fixed. By an inductive argument $\varphi|_{P(G,S)\times \{n\}}$ is the identity for every $n\ge 0$. Also, since $(e',-1)=(e,0)$ is fixed, then $P(G,S)\times \{-1\}$ is invariant, and $\varphi |_{P(G,S)\times \{-1\}}$ is the identity. By induction we deduce that $\varphi$ is the identity of $P$.    
\end{proof}

\begin{ej}
Let $G$ be the fundamental group of the Klein bottle, that is the group presented by $\langle x,y| \ xyx^{-1}y\rangle$. Then $G$ admits a regular poset representation. Indeed, there is a split short exact sequence $$1\to \Z \to G\to \Z\to 1,$$ where the map $\Z\to G$ takes a generator of $\Z$ to $y$, and $G\to \Z$ maps a word $w$ to its total $x$-exponent. Thus $G=\Z \rtimes_\psi \Z$ for some homomorphism $\psi:\Z \to \aut(\Z)$. Since $\Z$ admits a Cayley representation $P(\Z,S)$ for $S=\{0,1,3\}$ (see the comments after Example \ref{ciclico}), Theorem \ref{producto1} applies.
\end{ej}

In view of the results in previous sections, many groups with torsion (non-trivial elements of finite order) admit a regular poset representation. On the other hand if $G\neq 1, \Z_2$ is a torsion group, then it admits no regular representation. If $P$ is a non-discrete regular representation of a torsion group $G$, there are comparable elements $x<y$, so there exists $g\in G$ with $gx=y$. This implies that $g^nx<g^{n+1}x$ for every $n\ge 0$, and the fact that $g$ has finite order yields a contradiction. If $P$ is a discrete regular representation of $G$, then $G$ is trivial or of order $2$. For example, the Burnside groups $B(d,n)$ of exponent $n$ and $d\ge 2$ generators are finitely generated groups which do not admit a regular representation, and they are infinite for $n$ large enough \cite{Adi, Iva}. 

The posets $P$ constructed in Theorem \ref{producto1} are regular representations of indicable groups which are \textit{graded} in the sense that there exists a (\textit{rank}) function $\rho:P\to \Z$ such that 1. $x<y$ implies $\rho (x)<\rho (y)$ and 2. whenever $y$ covers $x$, we have $\rho(y)=\rho(x)+1$. Not every regular representation of a group is graded. Let $P$ be the poset with underlying set $\Z$ and the order $\vartriangleleft$ given by $a \vartriangleleft b$ if $a=b$ or $b-a\ge 2$. Then it is clear that $P$ is a regular representation of $G=\Z$. Indeed, if an automorphism $\varphi$ of $P$ fixes $0$, the set $\{-1,1\}$ of points not comparable with $0$ must be $\varphi$-invariant. Since $-1\vartriangleleft 1$, $\varphi$ fixes both $-1$ and $1$, and by an inductive argument $\varphi=1_P$. On the other hand $P$ is not graded as $0\vartriangleleft 2\vartriangleleft 4 \vartriangleleft 6$ and $0 \vartriangleleft 3 \vartriangleleft 6$ are two maximal chains from $0$ to $6$ of different length.

\begin{prop}
Let $G\neq 1, \Z_2$ be a group which admits a graded regular poset representation. Then $G$ is indicable. 
\end{prop}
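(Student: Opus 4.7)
The plan is to construct an explicit epimorphism $G \to \Z$ out of the rank function $\rho$. Fix a base point $x_0 \in P$ and define $d \colon G \to \Z$ by $d(g) = \rho(gx_0) - \rho(x_0)$. The first task is to check that $d$ does not depend on the choice of $x_0$. Since every automorphism of $P$ preserves the covering relation, if $y$ covers $x$ then $gy$ covers $gx$, so $\rho(gy) - \rho(gx) = 1 = \rho(y) - \rho(x)$, whence $\rho(gy) - \rho(y) = \rho(gx) - \rho(x)$. By Remark \ref{generan} the poset $P$ is connected (recall $G \neq 1, \Z_2$), so any two points in $P$ are joined by a zig-zag in the Hasse diagram, and iterating the previous identity shows that $\rho(gx) - \rho(x)$ is independent of $x$. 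Hence $d$ is well-defined.

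Next, $d$ is a group homomorphism: for $g,h \in G$,
\[
d(gh) = \rho(ghx_0) - \rho(x_0) = \bigl(\rho(g(hx_0)) - \rho(hx_0)\bigr) + \bigl(\rho(hx_0) - \rho(x_0)\bigr) = d(g) + d(h),
\]
where the first summand equals $d(g)$ by the independence of base point.

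It remains to show that $d$ is nontrivial; then $d(G)$ is a nontrivial subgroup of $\Z$, hence infinite cyclic, and composing $d$ with an isomorphism $d(G) \simeq \Z$ produces the desired epimorphism $G \to \Z$. Suppose, for contradiction, that $d \equiv 0$. Then $\rho(gx) = \rho(x)$ for every $g \in G$ and $x \in P$; since the action is transitive, $\rho$ is constant on $P$. But condition 2 on the rank function forces $\rho$ to increase by $1$ along every covering relation, so the constancy of $\rho$ means $P$ has no covering relations, i.e.\ $P$ is discrete (an antichain). In that case $\aut(P)$ is the full symmetric group on the underlying set, and regularity of the $G$-action requires $|G| = |P|$ and $G \simeq \operatorname{Sym}(P)$. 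If $|P| \geq 3$ then $|\operatorname{Sym}(P)| > |P|$, contradicting $|G| = |P|$; hence $|P| \leq 2$, which forces $G = 1$ or $G = \Z_2$, both excluded by hypothesis.

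The main hurdle is the well-definedness step: it hinges on the interplay between rank function condition 2 (covers change rank by exactly one) and the fact that poset automorphisms preserve covers. Everything else---the homomorphism property, the discreteness obstruction, and the extraction of an epimorphism from a nontrivial subgroup of $\Z$---is then essentially formal.
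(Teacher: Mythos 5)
Your proof is correct and follows essentially the same route as the paper's: both rest on the facts that automorphisms preserve covers, that the rank changes by exactly one along each cover, and that connectivity (Remark \ref{generan}) lets one propagate this along zig-zags in the Hasse diagram; you merely package the path-counting as base-point independence of $\rho(gx)-\rho(x)$ before telescoping, whereas the paper computes $\rho(ghx)$ directly by concatenating paths, and you spell out the non-triviality step that the paper leaves as a one-line remark. The only point worth making explicit is that passing from comparability-connectivity to a zig-zag of \emph{covers} uses condition 1 of the rank function (chains between two fixed comparable elements are finite, so comparabilities refine to saturated chains), a step the paper records in one sentence and your write-up glosses.
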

\begin{proof}
Let $P$ be a graded regular representation of $G$ with rank function $\rho:P\to \Z$. Since $G\neq 1, \Z_2$, by Remark \ref{generan}, $P$ must be connected. Fix $x\in P$. By adding a constant to $\rho$ we may assume $\rho(x)=0$. Define $f:G\to \Z$ by $f(g)=\rho(gx)$. We prove that $f$ is a group homomorphism. Let $g,h\in G$. Condition 1 in the definition of rank function guarantees that there are no infinite chains between two fixed elements of $P$. By connectivity there is a sequence $x=x_0,x_1,\ldots ,x_n=gx$ in which for every $0\le i< n$ either $x_{i+1}$ covers $x_{i}$ or $x_i$ covers $x_{i+1}$. Then, the number of $i$ for which the first happens minus the number of $i$ for which the latter happens is $\rho(gx)$. Similarly, there is a sequence $x=y_0,y_1,\ldots , y_m=hx$ in which each $y_i$ is covered by or covers $y_{i+1}$, and the corresponding difference is $\rho(hx)$. Thus, $$x_0, x_1, \ldots ,x_n, gy_1,gy_2, \ldots ghx$$ is a sequence of the same kind, and the corresponding difference is $\rho(gx)+\rho(hx)=\rho(ghx)$. Thus $f(gh)=f(g)+f(h)$. The fact that $f$ is an epimorphism follows directly from the fact that $P$ is non-discrete. 
\end{proof}

\begin{teo} \label{producto2}
Let $G$ be a group different from $\Z_2^2,\Z_2^3,\Z_2^4,\Z_3^2$ and let $\psi :\Z \to \aut(G)$ be a group homomorphism. Then $G\rtimes_\psi \Z$ admits a semi-regular poset representation with two orbits.
\end{teo}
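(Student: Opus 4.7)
The plan is to generalize Theorem~\ref{producto1} by replacing the Cayley (two-orbit, height one) building block with a three-orbit semi-regular poset representation $Q$ of $G$ of height two. Such a $Q$ exists for every $G$ in the hypothesis by the discussion preceding Proposition~\ref{zeta22}: when $G$ admits a digraphical regular representation $\Gamma$ (i.e., $G\notin\{\Z_2^2,\Z_2^3,\Z_2^4,\Z_3^2,Q_8\}$ by Theorem~\ref{DRR}), take $Q$ to be Babai's construction on $G\sqcup G'\sqcup G''$ with $g\le g'\le g''$ and $g\le h''$ for each arc $(g,h)$ of $\Gamma$; for $G=Q_8$, use Babai's separate three-orbit construction. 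In all cases $Q$ is connected, with bottom, middle, and top orbits $Q_{\mathrm{b}},Q_{\mathrm{m}},Q_{\mathrm{t}}$ each canonically identified with $G$, on which $G$ acts freely.

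Define $P$ as the quotient of $\bigsqcup_{n\in\Z}Q\times\{n\}$ by the identifications $(q,n)\sim(\psi(1)(q),n+1)$ for $q\in Q_{\mathrm{t}}$, using the canonical bijections $Q_{\mathrm{t}}\cong G\cong Q_{\mathrm{b}}$. After quotienting, the three orbits per slab collapse into two global orbits, $A=\bigsqcup_n Q_{\mathrm{b},n}=\bigsqcup_n Q_{\mathrm{t},n-1}$ and $B=\bigsqcup_n Q_{\mathrm{m},n}$, and $H=G\rtimes_\psi\Z$ acts on $P$ by the same formula as in Theorem~\ref{producto1}: $(g_1,n_1)\cdot(g_2,n_2)_X=(\psi(n_2)(g_1)g_2,n_1+n_2)_X$ for $X\in\{A,B\}$. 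A routine check shows this is a well-defined, free poset action with exactly the two orbits $A$ and $B$.

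The crux is then to show that every $\varphi\in\aut(P)$ fixing $(e,0)_A$ is the identity. First, the orbits $A,B$ are distinguishable by local poset structure---every $b\in B$ has a unique immediate $A$-successor and a unique immediate $A$-predecessor, while generic points of $A$ have many---so $\varphi(A)=A$ and $\varphi(B)=B$. One then proceeds slab by slab as in Theorem~\ref{producto1}: iteratively closing $\{(e,0)_A\}$ under ``points covering'' and ``points covered by'' and using the connectivity of $Q$ shows each slab $Q\times\{n\}$ is $\varphi$-invariant, after which the semi-regularity of the $G$-action on $Q$ pins $\varphi|_{Q\times\{0\}}$ to the identity, and the identification $(e,0)_{\mathrm{t}}=(e,1)_{\mathrm{b}}$ then enables induction on $|n|$. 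The main obstacle is the slab-invariance step: unlike Theorem~\ref{producto1}, DRR cover relations of the form $(g,n)_A<(\psi(1)(h),n+1)_A$ already connect slab $0$ directly to slab $1$, so the iterative closure leaks into neighboring slabs. The fix is to exploit the $B$-bridges as rigidifiers---$\varphi$ must fix $(e,0)_B$ as the unique immediate $B$-successor of $(e,0)_A$, hence $(e,1)_A$ as its unique immediate $A$-successor, so iteration pins down the entire diagonal $\{(e,n)_A,(e,n)_B:n\in\Z\}$---after which the digraphical-regular-representation rigidity of $\Gamma$ (or its $Q_8$ analogue) forces $\varphi$ to be the identity on every $A_n$, and the uniqueness of the immediate $B$-successor of each $(g,n)_A$ completes the argument.
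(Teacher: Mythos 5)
Your construction of $P$ is the paper's: stack Babai's three-orbit representation $Q$ and glue each top orbit to the next bottom orbit via $\psi(1)$, so the architecture is right. But there are two gaps. The smaller one is the trivial group: for $G=1$ your poset $P$ is a chain isomorphic to $\Z$, so every point of $A$ also has a unique immediate successor and predecessor, the orbits $A$ and $B$ cannot be separated by local structure, and in fact $\aut(P)\cong\Z$ acts transitively; $P$ is then not a two-orbit representation of $1\rtimes_\psi\Z=\Z$. The paper excludes $G=1$ and handles it by the separate example $\Z\cup\Z'$ from Section \ref{sectioncayley}. (Relatedly, for $G=\Z_2$ one must choose the connected DRR; connectivity of $Q$ is not automatic there.)

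The serious gap is the step ``the digraphical-regular-representation rigidity of $\Gamma$ forces $\varphi$ to be the identity on every $A_n$.'' Even granting that $\varphi$ fixes the diagonal and preserves each level $A_n$ setwise (which itself needs an argument propagating outward from the diagonal), what $\varphi$ preserves between two consecutive levels $A_n$ and $A_{n+1}$ is only the bipartite incidence structure induced by the arcs of $\Gamma$. The pairs of permutations preserving that bipartite structure form a group that is in general strictly larger than $\aut(\Gamma)$ --- this is precisely the gap between DRRs and Cayley-type representations that Remark \ref{DRRremark} and Proposition \ref{contraejemplos} are about ($\Z_6$ and $S_3$ admit DRRs but no Cayley representation). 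So ``DRR rigidity'' cannot be applied level by level as stated. The argument can be repaired: the order-definable successor map $\sigma$, sending $(g,n)_A$ to the unique cover of its unique $B$-cover, namely $(\psi(1)(g),n+1)_A$, commutes with $\varphi$, and this forces the permutation induced on $A_{n+1}$ to be the $\psi(1)$-conjugate of the one induced on $A_n$; only then does cover-preservation make each $\varphi|_{A_n}$ a genuine automorphism of $\Gamma$ fixing $e$, hence the identity. The paper avoids all of this: it shows each slab $Q\times\{n\}$ is $\varphi$-invariant by an upward/downward closure argument that tracks the three orbits of the slab separately and uses the global invariance of the middle orbit to prevent leakage into neighbouring slabs, and then simply quotes that $Q$ is a semi-regular representation of $G$, so its automorphism fixing a point is the identity --- never descending to the digraph $\Gamma$ at all.
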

\begin{proof}
The proof is very similar to that of Theorem \ref{producto1}. If $G$ is the trivial group, the existence of a non-Cayley and a Cayley representation was already discussed in the beginning of Section \ref{sectioncayley} and right after Example \ref{ciclico}. Assume $G$ is non-trivial. The representation we construct will not be Cayley. If $G\neq Q_8$, let $B$ be a semi-regular representation of $G$ with three orbits, as constructed by Babai in \cite{Bab2, Bab} and recalled after Theorem \ref{DRR} above. The underlying set of $B$ is $G\cup G'\cup G''$. The set of minimal points of $B$ is $G$, and the set of maximal points is $G''$. A point $g'\in G'$ covers just one element $g\in G$ and it is covered only by $g''\in G''$. If $G\neq \Z_2$, $B$ is necessarily connected, and if $G=\Z_2$, $B$ can and will be assumed to be connected. In particular, since $G$ is non-trivial, every minimal point $g$ is covered by at least one maximal point $h''$ (and also by $g'$).

Consider countable many copies $B\times \{n\}$ ($n\in \Z$) of $B$. Identify $(g'',n)$ with $(\psi(1)(g),n+1)$ for every $g\in G, n\in \Z$, to obtain a poset $P$. Note that the set $B'$ of points $(g',n)\in P$ for $g\in G, n\in \Z$, is invariant by any automorphism of $P$ (and so is its complement), since those are the points covered by just one element.

Let $H$ be the group isomorphic to $G\rtimes_\psi \Z$ defined in the proof of Theorem \ref{producto1}. There is a left action $\lambda$ of $H$ on $P$ given by $(g_1,n_1)\cdot (g_2,n_2)= (\psi(n_2)(g_1)g_2, n_1+n_2)$, $(g_1,n_1)\cdot (g_2',n_2)= ((\psi(n_2)(g_1)g_2)', n_1+n_2)$, $(g_1,n_1)\cdot (g_2'',n_2)= ((\psi(n_2)(g_1)g_2)'', n_1+n_2)$. This action is free with two orbits: $B'$ and its complement. Since these are invariant by any automorphism, to show that $\lambda$ is the required representation we may prove that every automorphism of $P$ fixing $(e,0)\in G\times \{0\}$ is the identity.   


Let $\varphi\in \aut(P)$ be such that $\varphi(e,0)=(e,0)$. Suppose that $D$ is a subset of $B\times \{0\}$ which is invariant in this strong sense: $D\cap (G\times \{0\})$, $D\cap (G'\times \{0\})$ and $D\cap (G''\times \{0\})$ are $\varphi$-invariant. We claim that the set $\overline{D}$ of points in $B\times \{0\}$ which are greater than some point in $D$ also satisfies this property. Indeed, if $(g',0)$ covers some element in $D\cap (G\times \{0\})$, then so does $\varphi(g',0)$. This implies that $\varphi(g',0)\in B\times \{0\}$, and since $B'$ is invariant, $\varphi(g',0)\in G'\times \{0\}$. Now, if $(h'',0)$ covers an element in $D\cap (G'\times \{0\})$, or if it covers an element in $D\cap (G\times \{0\})$, or if it covers an element $(g',0)$ which covers a point in $D\cap (G\times \{0\})$ (as before), then so does $\varphi(h'',0)$. In particular $\varphi(h'',0)\in B\times \{0\}$. Since the complement of $B'$ is $\varphi$-invariant, $\varphi(h'',0)\in G''\times \{0\}$. Similarly, the set $\underline{D}$ of points in $B\times \{0\}$ which are smaller than some point in $D$ is also invariant in the strong sense. Let $D_0=\{(e,0)\}$. For $k\ge 1$ odd define $D_k=\overline{D_{k-1}}$, while for $k\ge 2$ even, define $D_k=\underline{D_{k-1}}$. Then $D_k$ is $\varphi$-invariant for every $k\ge 0$. Since $B$ is connected, $B\times \{0\}=\cup D_k$ is $\varphi$-invariant. Since it is also $\varphi^{-1}$-invariant, $\varphi$ restricts to an automorphism of $B\times \{0\}$ fixing a point. Therefore this is the identity. By induction $\varphi$ restricts to the identity in each copy of $B$, as we wanted to prove.

The case $G=Q_8$ is very similar to the previous. In this case, the semi-regular representation $B$ of $G$ with three orbits constructed by Babai in \cite{Bab} (proof of Corollary 4.3) also has $G\cup G'\cup G''$ as underlying set, with the set of minimal points being $G$ and maximal points being $G''$. Every point in $G$ is covered by three points: two in $G'$ and one in $G''$, while each point in $G'$ is covered by two points. With this, essentially the same proof works as the set $B'$ of points $(g',n)\in P$ is invariant by any automorphism.
\end{proof}

Since for each exception $G=\Z_2^2,\Z_2^3,\Z_2^4,\Z_3^2$ there are only finitely many homomorphisms $\psi:\Z\to \aut(G)$, we deduce the following

\begin{coro}
With finitely many exceptions, every indicable group admits a semi-regular representation with two orbits.
\end{coro}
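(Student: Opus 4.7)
The plan is to deduce this corollary directly from Theorem \ref{producto2} together with the observation that indicability forces a $\Z$-semidirect decomposition. First I would invoke the definition: if $G$ is indicable, there is an epimorphism $f: G \to \Z$. Since $\Z$ is free (hence projective), $f$ splits, so setting $N = \ker(f)$ yields $G \cong N \rtimes_\psi \Z$ for some homomorphism $\psi: \Z \to \aut(N)$. This is exactly the form to which Theorem \ref{producto2} applies.

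Next I would apply the theorem: it guarantees that $N \rtimes_\psi \Z$ has a semi-regular poset representation with two orbits whenever $N \notin \{\Z_2^2, \Z_2^3, \Z_2^4, \Z_3^2\}$. Consequently, the only indicable groups for which the corollary could fail are those $G$ which cannot be written as $N \rtimes_\psi \Z$ with a non-exceptional kernel — and in particular such a $G$ is itself isomorphic to $N' \rtimes_{\psi'} \Z$ with $N' \in \{\Z_2^2, \Z_2^3, \Z_2^4, \Z_3^2\}$ (taking $N'$ to be the kernel of any epimorphism $G \to \Z$).

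Finally I would bound the exceptions. For each of the four candidate kernels $N$, the automorphism group $\aut(N)$ is finite — namely $GL_2(\F_2)$, $GL_3(\F_2)$, $GL_4(\F_2)$, and $GL_2(\F_3)$, of orders $6, 168, 20160, 48$. Since a homomorphism $\psi: \Z \to \aut(N)$ is determined by the image of $1 \in \Z$, there are only finitely many such $\psi$ for each $N$, and therefore only finitely many isomorphism classes of semidirect products $N \rtimes_\psi \Z$ arising this way. The set of possible exceptional indicable groups is a finite union of finite sets, hence finite. There is no real obstacle here — the argument is just the combination of Theorem \ref{producto2} with the splitting of any extension by $\Z$, and a cardinality count for the residual cases.
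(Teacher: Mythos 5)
Your proposal is correct and follows essentially the same route as the paper: the paper also recalls that indicable groups are exactly the semidirect products $N\rtimes_\psi \Z$ (the splitting being automatic since $\Z$ is free) and then notes that for each of the four exceptional kernels there are only finitely many homomorphisms $\psi:\Z\to\aut(N)$, hence only finitely many exceptional groups. Your explicit count of the automorphism group orders is a harmless elaboration of the paper's one-line finiteness observation.
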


In particular every torsion-free indicable group admits a semi-regular representation with two orbits, and thus every locally indicable group (that is, each non-trivial finitely generated subgroup is indicable) admits such a representation. This includes knot groups \cite{How}, torsion free one-relator groups \cite{Bro, How}, amenable left-orderable groups \cite{Mor}.
%

\bigskip

We finish with the postponed proof that $\Z_2^2$ does not admit a semi-regular representation with three orbits.

\bigskip

\textit{Proof of Proposition \ref{zeta22}}. Assume $P$ is a semi-regular poset representation of $G=\Z_2^2=\{0,a,b,a+b\}$ with three orbits. We can identify those orbits with three copies $G,G',G''$ of $G$, and  assume the action of $G$ is the regular left action on each orbit. Since points in the same orbit are not comparable, the height $h(P)$ of $P$ is 1 or 2.

Case 2: $h(P)=2$. We may assume by a relabeling that $0<0'<0''$. If $0'$ is only comparable with $0$ and $0''$, then $P$ is the poset associated to a digraphical regular representation of $G$ (see the proof of Theorem 7.3 in \cite{Bab2}), which is absurd by Theorem \ref{DRR}. Since the opposite $P^{op}$ of $P$ is also a semi-regular representation, we may assume $0'<a''$.

We call an edge $(x,y)$ in the Hasse diagram $\mathcal{H}(P)$ of $P$ \textit{long} if $x\in G$ and $y\in G''$. If there are two long edges starting at $0$, they must be $(0,b'')$ and $(0,(a+b)'')$. In this case we claim that the poset $\widetilde{P}$ obtained from $P$ by removing from the relation all the pairs $(x,y)$ such that $(x,y)$ is a long edge, has the same automorphism group as $P$. Indeed, $x\in G$ and $y\in G''$ are not comparable in $\widetilde{P}$ if and only if $(x,y)$ is a long edge of $\mathcal{H}(P)$. Thus we may assume there is at most one long edge starting at $0$.

Case 2.0: There is no long edge starting at $0$. In other words, there are no long edges at all in $\mathcal{H}(P)$. If $0'$ is only covered by $0''$ and $a''$ or if it is covered by all the elements in $G''$, the transposition which permutes $0''$ and $a''$ shows that the action of $G$ on $P$ is not semi-regular. We may assume that $0'$ is covered by $0'',a'',b''$ and not by $(a+b)''$. In this case $\aut(P)$ is isomorphic to the automorphism group of the subposet induced by $G\cup G'$, so $G$ admits a semi-regular representation with two orbits, a contradiction by Proposition \ref{contraejemplos}. 

Case 2.1: There is one long edge starting in $0$, which may be assumed to be $(0,b'')$. Since $b''$ covers $0$, $0'\nless b''$ and $0\nless b'$, so $b=b\cdot 0 \nless b\cdot b'= 0'$. Since $0'<a''$, $(a+b)'=(a+b)\cdot 0'<(a+b)\cdot a''=b''$, and since $b''$ covers $0$, $0\nless (a+b)'$. Then $a+b\nless 0'$. Thus $0'$ covers $0$ and possibly $a$, but nothing else. And $0'$ is covered by $0''$ and $a''$, and possibly $(a+b)''$, but not $b''$. Thus, there are only four cases to analyze. However, we cannot have $a<0'$ and $0'<(a+b)''$ simultaneously as $(a,(a+b)'')$ is a long edge. The three possible cases appear in Figure \ref{tres}.

\begin{figure}[h] 
\begin{center}
\includegraphics[scale=0.36]{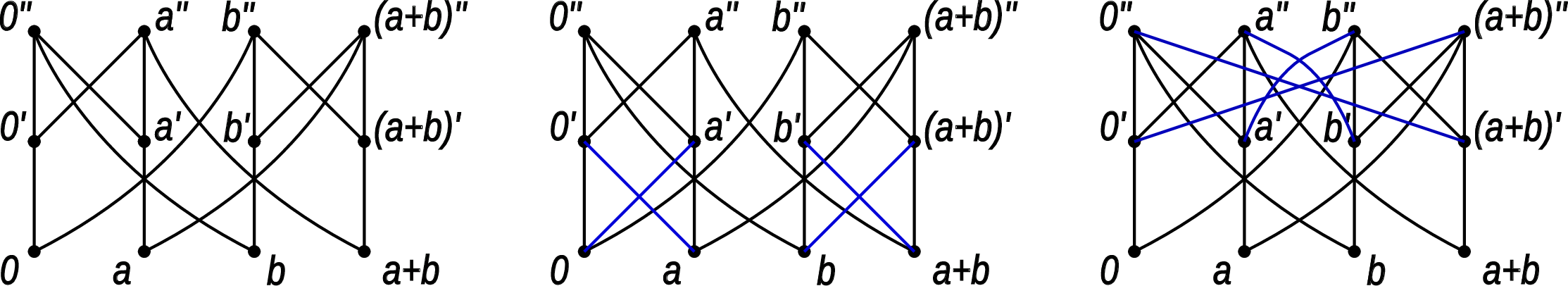}
\caption{Three candidates for Case 2.1.}\label{tres}
\end{center}
\end{figure}

In the first and second cases we have the involution which permutes $0''$ with $a''$, $b'$ with $(a+b)'$ and $b$ with $a+b$. In the third case there is an involution permuting $a''$ with $(a+b)''$, $a'$ with $(a+b)'$, and $a$ with $a+b$. These are non-identity automorphisms fixing $0$, a contradiction.


Case 1: $h(P)=1$. Since $P$ is connected by Remark \ref{generan}, by a relabeling we may assume that $0'>0<0''$. The dual $0'<0>0''$ can be ignored as $P^{op}$ is another semi-regular representation.

Since $h(P)=1$, $0'$ covers only points from the orbit $G$. If $0'$ covers four points, the transposition which permutes $0'$ and $a'$ shows that the action of $G$ on $P$ is not free. If $0'$ covers one point or three, $\aut(P)$ is isomorphic to the automorphism group of the subposet induced by $G\cup G''$. Thus, $G$ would admit a semi-regular representation with two orbits, a contradiction. We may assume that $0'$ covers two elements, $0$ and $a$. Then the transposition which permutes $0'$ and $a'$ is a non-trivial automorphism of $P$ which fixes $0$.
\hfill \qed  

\end{document}